\DeclareMathOperator{\Exvtex}{\mathbf{E}}
\DeclareMathOperator{\Cov}{\mathbf{Cov}}
\DeclareMathOperator{\Prob}{\mathbf{P}}
\DeclareMathOperator{\Var}{\mathbf{Var}}
\newtheorem{thm}{Theorem}
\newtheorem{remark}{Remark}
\newtheorem{lemma}{Lemma}
\def\index#1{}
\begin{document}

\begin{frontmatter}
\pretitle{Research Article}

\title{Prediction in polynomial errors-in-variables models}

\author{\inits{A.}\fnms{Alexander}~\snm{Kukush}\thanksref{cor1}\ead[label=e1]{alexander\_kukush@univ.keiv.ua}}
\author{\inits{I.}\fnms{Ivan}~\snm{Senko}\ead[label=e2]{ivan\_senko@ukr.net}}
\thankstext[type=corresp,id=cor1]{Corresponding author.}
\address{\institution{Taras Shevchenko National University of Kyiv}, Kyiv, \cny{Ukraine}}




\begin{abstract}
A~multivariate errors-in-variables (EIV) model with an~intercept
term, and a~polynomial EIV model are considered.
Focus is made on~a~structural homoskedastic
case, where vectors of~covariates are i.i.d. and measurement errors are
i.i.d. as~well. The~covariates contaminated with~errors are normally distributed
and the~corresponding classical errors are also assumed normal. In~both
models, 
it is shown that (inconsistent) ordinary least squares estimators of~regression
parameters yield an~a.s. approximation to~the~best prediction of~response
given the values of observable covariates. Thus, not~only in~the~linear
EIV, but in~the~polynomial EIV models as~well, consistent estimators of~regression
parameters are useless in~the~prediction problem, provided the size and
covariance structure of observation errors for the predicted subject do not
differ from those in the data used for the model fitting.
\end{abstract}

\begin{keywords}
\kwd{Prediction}
\kwd{multivariate errors-in-variables model}
\kwd{polynomial errors-in-variables model}
\kwd{ordinary least squares}
\kwd{consistent estimator of best prediction}
\kwd{confidence interval}
\end{keywords}
\begin{keywords}[MSC2010]%
\kwd{62J05}
\kwd{62J02}
\kwd{62H12}
\end{keywords}

\received{\sday{24} \smonth{4} \syear{2020}}
\revised{\sday{4} \smonth{5} \syear{2020}}
\accepted{\sday{6} \smonth{5} \syear{2020}}
\publishedonline{\sday{25} \smonth{5} \syear{2020}}

\end{frontmatter}
\section{Introduction}
\label{Intro}

We deal with~errors-in-variables (EIV) models which are widely used in~system
identification~\cite{S18}, epidemiology~\cite{CRSC06}, econometrics~\cite{WM00},
etc. In~such regression models\index{regression ! models} (with~unknown parameter $\beta $), the response
variable\index{response variable} $y$ depends on~the covariates $z$ and $\xi $, where $z$ is observed
precisely and $\xi $ is observed with~error. We consider the~\emph{classical}
measurement error\index{measurement error} $\delta $, i.e., instead of~$\xi $ the~surrogate data
$x = \xi + \delta $ is observed; moreover, the~model is
\emph{structural}, i.e. $z$, $\xi $ and $\delta $ are mutually independent,
and we have i.i.d. copies of the~model ($z_{i}$, $\xi _{i}$,
$\delta _{i}$, $x_{i} = \xi _{i} + \delta _{i}$, $y_{i}$),
$i = 1, \dots , n$. The~measurement error\index{measurement error} can be
\emph{nondifferential}, when the~distribution of~$y$ given
$\left ( \xi , z, x \right )$ depends only on $\left ( \xi , z
\right )$, and \emph{differential}, otherwise \cite[Section~2.5]{CRSC06}.

The present paper is devoted to~the~prediction\index{prediction} of~the~response variable\index{response variable} from
$\xi $ and $z$. Based on the~observations ($y_{i}$, $z_{i}$, $x_{i}$),
$i = 1, \dots , n$, and given new values $z_{0}$ and $x_{0}$ of~$z$ and
$x$ variables, we want to~predict either the~new $y_{0}$ (this procedure
is called \emph{individual prediction}) or the~exact relation
$\eta _{0} = \Exvtex \left [ \left . y_{0} \right | z_{0}, \xi _{0}
\right ]$, where $\xi _{0}$ is a~new value for~$\xi $ (this procedure is
called \emph{mean prediction}). Both~prediction problems are important in~econometrics~\cite{GPG17}.
The individual prediction is used in~the~Leave-one-out cross-validation
procedure.

The~best mean squared error individual predictor is
%
\begin{equation}
\label{bestInd}
\hat{y}_{0} = \Exvtex \left [ \left . y_{0} \right | z_{0}, x_{0} \right ]
\end{equation}
and the~best mean squared error predictor\index{predictor} of~$\eta _{0}$ is
%
\begin{equation}
\label{bestMean}
\hat{\eta }_{0} = \Exvtex \left [ \left . \eta _{0} \right | z_{0}, x_{0}
\right ].
\end{equation}
For~the nondifferential measurement error,
\begin{equation*}
\hat{\eta }_{0} = \Exvtex \left [~\Exvtex \left [ \left . y_{0} \right | z_{0},
\xi _{0}, x_{0} \right ]~\right | z_{0}, x_{0}] = \Exvtex \left [ \left . y_{0}
\right | z_{0}, x_{0} \right ] = \hat{y}_{0},
\end{equation*}
and the~best mean predictor coincides with~the~best individual predictor,
but this needs not to hold for~the~differential measurement error.\index{measurement error}

Both predictors\index{predictor} \eqref{bestInd} and \eqref{bestMean} are
\emph{unfeasible}, because they involve unknown model parameters. Our~goal
is to~construct consistent estimators of~the~predictors\index{predictor} as~the~sample size~$n$
grows.

The nonparametric individual prediction\index{prediction} under errors in covariates is studied
in \cite{MM15}. Below we consider only parametric models.

For~scalar linear EIV models with~normally distributed $\xi $ and
$\delta $, it is stated in~\cite[Section~2.5.1]{CV99} that the~ordinary
least squares (OLS) predictor\index{OLS ! predictor} should be used even when dealing with~the~EIV
model. This is quite surprising, since the~OLS estimator\index{OLS ! estimator} of~$\beta $ is
inconsistent due~to~the~attenuation effect~\cite{CV99}. In fact, there
is no surprise that in a \emph{Gaussian} model the linear OLS estimator\index{OLS ! estimator}
provides a consistent prediction, since the Gaussian dependence is always linear.
In the present paper, we consider a \emph{non-Gaussian} regression model, since
the distribution of the observable covariate $z$ is not assumed Gaussian; therefore,
the consistency of OLS predictions\index{OLS ! predictions} in such a model is a nontrivial
feature.

We confirm the assertion, that the OLS estimator\index{OLS ! estimator} yields a suitable prediction
under the model validity, for~two kinds of~EIV models: multivariate linear\index{multivariate linear EIV model}
and polynomial\index{polynomial EIV model}. For~this~purpose, we just follow the~recommendation of~\cite[Section~2.6]{CV99}
and analyze the~regression\index{regression} of $y$ on~the~observable $z$ and $x$. In~other
nonlinear EIV models,\index{nonlinear EIV models} the~OLS predictor\index{OLS ! predictor} (contaminated from~the~initial
regression\index{regression} $y$ on $\left (z, \xi \right )$, where we naively substitute
$x$ for~$\xi $) is inconsistent; instead the~least-squares predictor can
be used from~the~regression\index{regression} $y$ on $\left (z, x \right )$.

The~paper is organized as~follows. In~Sections~\ref{Lin_EIV} and \ref{Poly_EIV}, we state the~results on~prediction\index{prediction} in~multivariate linear\index{multivariate linear EIV model}
and polynomial EIV models,\index{polynomial EIV model} respectively. Section~\ref{Other_EIV} studies
briefly some other nonlinear EIV models,\index{nonlinear EIV models} and Section~\ref{Conclusion} concludes.

Through the~paper, all vectors are column ones, $\Exvtex $ stands for~the~expectation
and acts as~an~operator on~the~total product, and
$\Cov \left ( x \right )$ denotes the~covariance matrix of~a~random~vector
$x$. By~$I_{p}$ we denote the~identity matrix of~size~$p$. For~symmetric
matrices $A$ and $B$ of~the~same size, $A>B$ and $A \geq B$ means that
$A-B$ is positive definite or positive semidefinite, respectively.

\section{Prediction\index{prediction} in~a~multivariate linear EIV model\index{multivariate linear EIV model}}
\label{Lin_EIV}

\subsection{Model and main assumptions}
\label{sec2.1}

Consider a~multivariate linear EIV model\index{multivariate linear EIV model} with~the intercept term\index{intercept term} (structural
case):
%
\begin{align}
y &= b + C^{T} z + B^{T} \xi + e + \epsilon ,
\label{model_y}
\\
x &= \xi + \delta .
\label{model_x}
\end{align}
Here the random vector $y$ is the~response variable distributed in~$
\mathbb{R}^{d}$; the random vector $z$ is the~observable covariate distributed
in~$\mathbb{R}^{q}$, the random vector $\xi $ is the~unobservable (latent) covariate
distributed in~$\mathbb{R}^{m}$; $x$ is the~surrogate data observed instead
of~$\xi $; $e+\epsilon $ is the~random error in~$y$, $\delta $ is the~measurement
error\index{measurement error} in~the~latent covariate;\index{latent covariate} $C \in \mathbb{R}^{q \times d}$,
$B \in \mathbb{R}^{m \times d}$ and $b \in \mathbb{R}^{d}$ contain unknown
regression parameters, where $b$ is the~intercept term.\index{intercept term} The random vector
$e$ models the error in~the~regression equation,\index{regression ! equation} and $\epsilon $ models the measurement
error\index{measurement error} in~$y$; $\epsilon $ can be correlated with~$\delta $.

Such models are studied, e.g., in \cite{VV91,S18,Sh18} in~relation to~system identification problems and numerical
linear algebra. We list the model assumptions.
\begin{enumerate}[label={\bf(\roman*)},ref=(\roman*),align=left,leftmargin=*]
\item\label{indErrors} Three vectors $z$, $\xi $, $e$ and the augmented
measurement error\index{measurement error} vector
$\left ( \epsilon ^{T}, \delta ^{T} \right )^{T}$ are independent with~finite
2nd moments; the errors $\epsilon $ and $\delta $ can be correlated.
\item\label{nonsingRegr} The covariance matrices
$\Sigma _{z}:= \Cov (z)$ and $\Sigma _{x}:= \Cov (x)$ are nonsingular.
\item\label{zeromeanErrs} The errors $e$, $\epsilon $ and
$\delta $ have zero means.
\item\label{gaussErrors} The errors $\epsilon $, $\delta $ and
covariate $\xi $ are jointly Gaussian.
\end{enumerate}

Introduce the~cross-covariance matrix
\begin{equation*}
\Sigma _{\epsilon \delta }:= \Exvtex \epsilon \delta ^{T}.
\end{equation*}
The~classical measurement error $\delta $\index{measurement error} is nondifferential if, and only
if, $\epsilon $ and $\delta $ are independent, i.e.
$\Sigma _{\epsilon \delta } = 0$ (see Section~\ref{Intro} for~the~definition
of the nondifferential error).

We denote also
\begin{gather}
\mu = \Exvtex x, \qquad \Sigma _{\xi }= \Cov (\xi ), \qquad
\nonumber
\\
\Sigma _{e}= \Cov (e), \qquad \Sigma _{\epsilon }= \Cov (\epsilon ),
\qquad \Sigma _{\delta }= \Cov (\delta ),
\nonumber
\\
\label{defSigma}
\Sigma _{11} = \operatorname{block-diag} (\Sigma _{\xi }, \Sigma _{\epsilon }), \qquad \Sigma _{12} =
\begin{bmatrix}
\Sigma _{\xi
}\\
\Sigma _{\epsilon \delta }%
\end{bmatrix}
, \qquad \Sigma _{22} = \Sigma _{x}.
\end{gather}
Thus, $\Sigma _{11}$ is a~block-diagonal matrix, and sometimes we will
use $\Sigma _{22}$ for~the~covariance matrix of~$x$.

\subsection{Regression\index{regression} of~$y$ on $z$ and $x$}
\label{sec2.2}

\begin{lemma}
\label{repr_y_lemma}
Assume conditions \emph{\ref{indErrors}} to \emph{\ref{gaussErrors}}.
\renewcommand{\labelenumi}{(\alph{enumi})}
\begin{enumerate}
\item The~response variable\index{response variable}~\eqref{model_y} can be represented as
\begingroup
\abovedisplayskip=7.5pt
\belowdisplayskip=7.5pt
\begin{equation}
\label{repr_y}
y = b_{x} + C^{T} z + B^{T}_{x} x + u,
\end{equation}
where $z$, $x$, and $u$ are independent, $C$ remains unchanged compared
with \eqref{model_y}, $\Exvtex u = 0$, $\Exvtex \|u\|^{2} < \infty $, and
%
\begin{gather}
\label{repr_bx}
b_{x} = b + B^{T} \Sigma _{\delta }\Sigma _{x}^{-1} \mu - \Sigma _{
\epsilon \delta }\Sigma _{x}^{-1} \mu ,
\\
\label{repr_Bx}
B^{T}_{x} = B^{T} \Sigma _{\xi }\Sigma _{x}^{-1} + \Sigma _{\epsilon
\delta }\Sigma _{x}^{-1}.
\end{gather}
\item Assume additionally the~following condition:
\begin{enumerate}[label={\bf(\roman*)},ref=(\roman*),align=left,leftmargin=*,start=5]
\item\label{nonsingU_assump} Either $\Sigma _{e}$ or
$\Sigma _{11}- \Sigma _{12}\Sigma _{22}^{-1} \Sigma _{12}^{T}$ is positive
definite.
\end{enumerate}

Then the~error term $u$ in \eqref{repr_y} has a positive definite covariance
matrix, $\Sigma _{u}$.
\end{enumerate}
\endgroup
\end{lemma}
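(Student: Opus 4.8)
The plan is to rewrite the latent contribution $B^{T}\xi+\epsilon$ of \eqref{model_y} as its Gaussian regression on the observable $x$ plus an independent residual, exploiting the joint normality in \ref{gaussErrors}. First I would stack $w:=\begin{bmatrix}\xi\\\epsilon\end{bmatrix}$. Since $\Exvtex\delta=0$ we have $\Exvtex\xi=\Exvtex x=\mu$, and \ref{indErrors} gives $\xi$ independent of $(\epsilon,\delta)$, so that $\Cov(\xi,x)=\Sigma_{\xi}$, $\Cov(\epsilon,x)=\Sigma_{\epsilon\delta}$ and $\Cov(\xi,\epsilon)=0$. Hence the pair $(w,x)$ has mean $\bigl(\mu^{T},0^{T},\mu^{T}\bigr)^{T}$ and covariance blocks $\Var(w)=\Sigma_{11}$, $\Cov(w,x)=\Sigma_{12}$, $\Var(x)=\Sigma_{22}$ exactly as in \eqref{defSigma}, and by \ref{gaussErrors} it is jointly Gaussian.

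Next I would invoke the Gaussian regression formula: with $\Sigma_{22}=\Sigma_{x}$ nonsingular by \ref{nonsingRegr},
\begin{equation*}
w=\begin{bmatrix}\mu\\0\end{bmatrix}+\Sigma_{12}\Sigma_{22}^{-1}(x-\mu)+r,
\end{equation*}
where $r$ is Gaussian with $\Exvtex r=0$, $\Var(r)=\Sigma_{11}-\Sigma_{12}\Sigma_{22}^{-1}\Sigma_{12}^{T}$, and, crucially, $r$ is \emph{independent} of $x$ (not merely uncorrelated; this is where \ref{gaussErrors} is indispensable). Since $B^{T}\xi+\epsilon=\begin{bmatrix}B^{T}&I_{d}\end{bmatrix}w$, substituting into \eqref{model_y} and collecting terms, the coefficient of $x$ becomes $\begin{bmatrix}B^{T}&I_{d}\end{bmatrix}\Sigma_{12}\Sigma_{22}^{-1}=(B^{T}\Sigma_{\xi}+\Sigma_{\epsilon\delta})\Sigma_{x}^{-1}$, which is $B_{x}^{T}$ of \eqref{repr_Bx}, and the residual is $u:=e+\begin{bmatrix}B^{T}&I_{d}\end{bmatrix}r$. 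The constant term is $b+B^{T}\mu-B_{x}^{T}\mu$; using $\Sigma_{x}=\Sigma_{\xi}+\Sigma_{\delta}$, so that $I_{m}-\Sigma_{\xi}\Sigma_{x}^{-1}=\Sigma_{\delta}\Sigma_{x}^{-1}$, this reduces to $b_{x}$ of \eqref{repr_bx}.

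Then I would verify independence and the moment bounds. By \ref{indErrors}, $z$ is independent of $(\xi,e,\epsilon,\delta)$, hence of the pair $(x,u)$, both being functions of these. Within that pair, $r$ is independent of $x$ by the Gaussian step and $e$ is independent of $(x,r)$ by \ref{indErrors}, so $u=e+\begin{bmatrix}B^{T}&I_{d}\end{bmatrix}r$ is independent of $x$; combining these factorizations yields the mutual independence of $z$, $x$, $u$. Finally $\Exvtex u=0$ by \ref{zeromeanErrs} (the residual $r$ having zero mean as well) and $\Exvtex\|u\|^{2}<\infty$ by the finite-second-moment part of \ref{indErrors}, while $C$ is unchanged by construction. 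This settles part~(a).

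For part~(b), since $e$ and $r$ are independent,
\begin{equation*}
\Sigma_{u}=\Sigma_{e}+\begin{bmatrix}B^{T}&I_{d}\end{bmatrix}\bigl(\Sigma_{11}-\Sigma_{12}\Sigma_{22}^{-1}\Sigma_{12}^{T}\bigr)\begin{bmatrix}B^{T}&I_{d}\end{bmatrix}^{T},
\end{equation*}
the second summand being positive semidefinite, and hypothesis \ref{nonsingU_assump} supplies two cases. If $\Sigma_{e}>0$, then $\Sigma_{u}\geq\Sigma_{e}>0$. If instead $\Sigma_{11}-\Sigma_{12}\Sigma_{22}^{-1}\Sigma_{12}^{T}>0$, I would use that $\begin{bmatrix}B^{T}&I_{d}\end{bmatrix}$ has full row rank $d$ (owing to the $I_{d}$ block), so its transpose is injective and the quadratic form is strictly positive on every nonzero vector; hence again $\Sigma_{u}>0$. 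The single genuinely delicate point in the whole argument is the independence—rather than mere uncorrelatedness—of $r$ and $x$, which rests entirely on the joint normality \ref{gaussErrors}.
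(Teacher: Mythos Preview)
Your proof is correct and follows essentially the same route as the paper: stack $(\xi,\epsilon)$, apply the Gaussian conditional-distribution formula relative to $x$, and identify $u$ as $e$ plus the linear image of the Gaussian residual. Your treatment is in fact a bit more explicit than the paper's in two places---the verification that the constant term collapses to $b_{x}$ via $I_{m}-\Sigma_{\xi}\Sigma_{x}^{-1}=\Sigma_{\delta}\Sigma_{x}^{-1}$, and the full-row-rank argument for $[B^{T}\ I_{d}]$ in part~(b)---but the underlying decomposition and logic are identical.
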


\begin{proof}
(a) Introduce the jointly Gaussian vectors
\begingroup
\abovedisplayskip=7.5pt
\belowdisplayskip=7.5pt
\begin{equation*}
x^{(1)} =
\begin{pmatrix}
\xi
\\
\epsilon
\end{pmatrix}
, \qquad x^{(2)} = x.
\end{equation*}
We have
\begin{gather*}
\mu ^{(1)} := \Exvtex x^{(1)} =
\begin{pmatrix}
\mu
\\
0
\end{pmatrix}
, \qquad \mu ^{(2)} := \Exvtex x^{(2)}=\mu ;
\\
\Cov \left (x^{(1)} \right ) = \Sigma _{11}, \qquad \Cov \left ( x^{(2)}
\right ) = \Sigma _{22},
\end{gather*}
which is positive definite by assumption~\ref{nonsingRegr},
\begin{equation*}
\Exvtex \left [ x^{(1)} \left ( x^{(2)} \right )^{T} \right ] = \Sigma _{12},
\end{equation*}
where the~matrices $\Sigma _{11}$, $\Sigma _{12}$, $\Sigma _{22}$ are given
in~\eqref{defSigma}. Now, according to Theorem 2.5.1 \cite{A58} the conditional
distribution of $x^{(1)}$ given $x^{(2)}$ is
\begin{gather}
\left [ \left . x^{(1)} \right | x^{(2)} \right ] \sim \mathcal{N}
\left (\mu _{1|2}, V_{1|2} \right ),
\nonumber
\\
\mu _{1|2} = \mu _{1|2} ( x^{(2)} ) = \mu ^{(1)} + \Sigma _{12}
\Sigma _{22}^{-1} \left ( x^{(2)} - \mu ^{(2)} \right ) =
\begin{pmatrix}
\Sigma _{\delta }\Sigma _{x}^{-1} \mu + \Sigma _{\xi }\Sigma _{x}^{-1} x
\\
\Sigma _{\epsilon \delta }\Sigma _{x}^{-1} (x - \mu )
\end{pmatrix}
,
\nonumber
\\
\label{V12}
V_{1|2} = \Sigma _{11}- \Sigma _{12}\Sigma _{22}^{-1} \Sigma _{12}^{T}.
\end{gather}

Hence
$ (\xi ^{T}, \epsilon ^{T})^{T} - \mu _{1|2}(x) =: (\gamma _{1}^{T},
\gamma _{2}^{T})^{T} $ is uncorrelated with~$x$ and has the Gaussian distribution
$\mathcal{N} \left ( 0, V_{1|2} \right )$. Therefore,
%
\begin{align}
\label{xi_gamma}
\xi &= \Sigma _{\delta }\Sigma _{x}^{-1} \mu + \Sigma _{\xi }\Sigma _{x}^{-1}
x + \gamma _{1},
\\
\label{eps_gamma}
\epsilon &= \Sigma _{\epsilon \delta }\Sigma _{x}^{-1} \left ( x -
\mu \right ) + \gamma _{2}.
\end{align}
\endgroup
Substitute \eqref{xi_gamma} and \eqref{eps_gamma} into~\eqref{model_y}
and obtain the~desired relations~\xch{\eqref{repr_y}--\eqref{repr_Bx}}{\eqref{repr_y}~-- \eqref{repr_Bx}} with
\begin{equation*}
u = e + B^{T} \gamma _{1} + \gamma _{2}.
\end{equation*}
Here $(z, e, x)$ and a~couple $( \gamma _{1}, \gamma _{2})$ are independent,
hence $(z, x, u)$ are independent as well. This implies the~statement (a).

(b) We have
%
\begin{equation}
\label{Cov_u}
\Cov (u) = \Sigma _{e}+ \Cov \left (B^{T} \gamma _{1} + \gamma _{2}
\right ) =: \Sigma _{u}.
\end{equation}
If $\Sigma _{e}> 0$ then $\Sigma _{u}\geq \Sigma _{e}> 0$, thus,
$\Sigma _{u}> 0$; and if $V_{1|2} > 0$ then
$\Sigma _{u}\geq \Cov \left (B^{T} \gamma _{1} + \gamma _{2} \right ) >
0$, thus, $\Sigma _{u}> 0$. This accomplishes the~proof of Lemma~\ref{repr_y_lemma}.
\end{proof}

As~a~particular case take a~model with a univariate response and univariate
regressor~$\xi $.

\begin{lemma}
\label{repr_y_Univ_lemma}
Consider the~model \eqref{model_y}, \eqref{model_x} with~$d = m = 1$. Assume
conditions \emph{\ref{indErrors}}, \emph{\ref{zeromeanErrs}}, and \emph{\ref{gaussErrors}}. Suppose also that
%
\begin{equation}
\label{nonsingUniv}
\Sigma _{z}>0, \qquad \Sigma _{\epsilon }> 0, \qquad \Sigma _{\delta }>0,
\qquad \left |\operatorname{Corr}(\epsilon , \delta ) \right | < 1.
\end{equation}

Then expressions \xch{\eqref{repr_y}--\eqref{repr_Bx}}{\eqref{repr_y} -- \eqref{repr_Bx}} hold true, where the~error
term $u$ has a positive variance $\sigma ^{2}_{u} = \Sigma _{u}$.
\end{lemma}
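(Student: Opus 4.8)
The plan is to obtain the representation \eqref{repr_y}--\eqref{repr_Bx} as a direct specialization of Lemma~\ref{repr_y_lemma}(a), and then to establish $\sigma^2_u > 0$ by a direct second-moment computation rather than by appealing to part~(b), for the reason explained below. First I would verify that the hypotheses \ref{indErrors}--\ref{gaussErrors} of Lemma~\ref{repr_y_lemma} all hold in the present univariate setting. Conditions \ref{indErrors}, \ref{zeromeanErrs}, and \ref{gaussErrors} are assumed verbatim, so only the nonsingularity assumption \ref{nonsingRegr} needs an argument. Here $\Sigma_{z} > 0$ is part of \eqref{nonsingUniv}, and since $x = \xi + \delta$ with $\xi$ and $\delta$ independent by \ref{indErrors}, one has $\Sigma_{x} = \Sigma_{\xi} + \Sigma_{\delta} \geq \Sigma_{\delta} > 0$, using $\Sigma_{\delta} > 0$ from \eqref{nonsingUniv}. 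Thus \ref{nonsingRegr} holds, Lemma~\ref{repr_y_lemma}(a) applies, and it yields \eqref{repr_y}--\eqref{repr_Bx} with the scalar error term $u = e + B\gamma_{1} + \gamma_{2}$, all quantities being scalars since $d = m = 1$.

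For the positivity of $\sigma^2_u$, the main obstacle is that one cannot simply invoke Lemma~\ref{repr_y_lemma}(b): its sufficient condition \ref{nonsingU_assump} may fail under the present hypotheses. Indeed, if $\Sigma_{\xi} = 0$, then the conditional covariance matrix $V_{1|2}$ of \eqref{V12} is only positive semidefinite (its first row and column vanish), while $\Sigma_{e} > 0$ is not assumed. So I would argue directly. Since $e$ is independent of $(\gamma_{1}, \gamma_{2})$, relation \eqref{Cov_u} gives $\sigma^2_u = \Sigma_{e} + \Var(B\gamma_{1} + \gamma_{2}) \geq \Var(B\gamma_{1} + \gamma_{2})$, and it suffices to prove the last variance is strictly positive.

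The key observation is that, by \eqref{xi_gamma} and \eqref{eps_gamma}, the quantity $B\gamma_{1} + \gamma_{2}$ is precisely the residual of $B\xi + \epsilon$ after its linear regression on $x$, so that
\begin{equation*}
\Var(B\gamma_{1} + \gamma_{2}) = \Var(B\xi + \epsilon) - \frac{\bigl(\Cov(B\xi + \epsilon, x)\bigr)^{2}}{\Sigma_{x}}.
\end{equation*}
Using $\xi$ independent of $(\epsilon, \delta)$ and $x = \xi + \delta$, I would compute $\Var(B\xi + \epsilon) = B^{2}\Sigma_{\xi} + \Sigma_{\epsilon}$ and $\Cov(B\xi + \epsilon, x) = B\Sigma_{\xi} + \Sigma_{\epsilon\delta}$. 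Substituting and clearing the denominator $\Sigma_{x} = \Sigma_{\xi} + \Sigma_{\delta}$, the numerator simplifies to
\begin{equation*}
\Sigma_{\xi}\bigl(B^{2}\Sigma_{\delta} - 2B\Sigma_{\epsilon\delta} + \Sigma_{\epsilon}\bigr) + \bigl(\Sigma_{\epsilon}\Sigma_{\delta} - \Sigma_{\epsilon\delta}^{2}\bigr).
\end{equation*}

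The first summand is nonnegative, since $\Sigma_{\xi} \geq 0$ and the bracketed quadratic in $B$ has positive leading coefficient $\Sigma_{\delta}$ and discriminant $4(\Sigma_{\epsilon\delta}^{2} - \Sigma_{\epsilon}\Sigma_{\delta}) < 0$; the second summand equals $\Sigma_{\epsilon}\Sigma_{\delta}(1 - \operatorname{Corr}(\epsilon,\delta)^{2}) > 0$, which is exactly where the assumptions $\Sigma_{\epsilon} > 0$, $\Sigma_{\delta} > 0$, and $|\operatorname{Corr}(\epsilon,\delta)| < 1$ from \eqref{nonsingUniv} enter. Hence the numerator is strictly positive, giving $\Var(B\gamma_{1} + \gamma_{2}) > 0$ and therefore $\sigma^2_u > 0$, which completes the argument. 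The only delicate point is the one flagged above, namely that the uniform bound must survive the degenerate case $\Sigma_{\xi} = 0$; the displayed decomposition handles it automatically, since the strictly positive term $\Sigma_{\epsilon}\Sigma_{\delta} - \Sigma_{\epsilon\delta}^{2}$ remains even when the $\Sigma_{\xi}$-term drops out.
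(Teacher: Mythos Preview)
Your argument is correct, and the route you take genuinely differs from the paper's. The paper proceeds by a case split on whether $\Sigma_{\xi}>0$ or $\Sigma_{\xi}=0$: in the nondegenerate case it computes the $2\times 2$ matrix $V_{1|2}$ explicitly, checks that its $(1,1)$ entry $\sigma_{\xi}^{2}\sigma_{\delta}^{2}/\sigma_{x}^{2}$ is positive and that $\det V_{1|2}=\sigma_{\delta}^{2}\sigma_{x}^{2}(\sigma_{\epsilon}^{2}\sigma_{\delta}^{2}-\sigma_{\epsilon\delta}^{2})/\sigma_{x}^{4}>0$, and then invokes Lemma~\ref{repr_y_lemma}(b); in the degenerate case $\Sigma_{\xi}=0$ it writes $u=e+\epsilon-\sigma_{\epsilon\delta}\sigma_{\delta}^{-2}\delta$ directly and reads off $\sigma_{u}^{2}=\sigma_{e}^{2}+\sigma_{\epsilon}^{2}-\sigma_{\epsilon\delta}^{2}\sigma_{\delta}^{-2}>0$. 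Your approach instead bypasses the case split by projecting the scalar combination $B\xi+\epsilon$ onto $x$ and expanding the residual variance into $\Sigma_{\xi}(B^{2}\Sigma_{\delta}-2B\Sigma_{\epsilon\delta}+\Sigma_{\epsilon})+(\Sigma_{\epsilon}\Sigma_{\delta}-\Sigma_{\epsilon\delta}^{2})$, which is positive uniformly in $\Sigma_{\xi}\geq 0$. This is slightly more elegant in that it handles both cases at once and never appeals to condition~\ref{nonsingU_assump}; the paper's version, on the other hand, keeps closer contact with the general machinery of Lemma~\ref{repr_y_lemma}(b) and makes the role of $V_{1|2}$ more visible.
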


\begin{proof}
First suppose that $\Sigma _{\xi }>0$. According to~Lemma~\ref{repr_y_lemma},
it is enough to~check that $V_{1|2}$ given in~\eqref{V12} is positive definite.

A direct computation shows that
\begin{equation*}
V_{1|2} = \frac{1}{\sigma ^{2}_{x}}
\begin{pmatrix}
\sigma ^{2}_{\xi }\sigma ^{2}_{\delta }& - \sigma ^{2}_{\xi }\sigma _{
\epsilon \delta }
\\
- \sigma ^{2}_{\xi }\sigma _{\epsilon \delta } & \sigma ^{2}_{\epsilon
}\sigma ^{2}_{x} - \sigma ^{2}_{\epsilon \delta }
\end{pmatrix}
=: \frac{V}{\sigma ^{2}_{x}}.
\end{equation*}
Here in~the~scalar case we write $\sigma ^{2}_{\xi }= \Sigma _{\xi }$,
$\sigma ^{2}_{\delta }= \Sigma _{\delta }$,
$\sigma _{\epsilon \delta } = \Sigma _{\epsilon \delta }$, etc. The matrix~$V$
is positive definite, because
$\sigma ^{2}_{\xi }\sigma ^{2}_{\delta }> 0$ and
\begin{equation*}
\operatorname{det} V = \sigma ^{2}_{\delta }\sigma ^{2}_{x} \left (
\sigma ^{2}_{\epsilon }\sigma ^{2}_{\delta }- \sigma ^{2}_{\epsilon
\delta } \right ) > 0
\end{equation*}
due to~condition~\eqref{nonsingUniv}.

Now, suppose that $\Sigma _{\xi }=0$. Then $\xi =\mu $ almost surely. With
some computations, it can be shown that
$u=e+\epsilon -\sigma _{\epsilon \delta }\sigma _{\delta }^{-2}\delta $ almost
surely, whence
$\sigma _{u}^{2}=\sigma _{e}^{2}+\sigma _{\epsilon }^{2}-\sigma _{
\epsilon \delta }^{2}\sigma _{\delta }^{-2}>0$. Lemma~\ref{repr_y_Univ_lemma}
is proved.
\end{proof}

\subsection{Individual prediction}
\label{sec2.3}

Now, consider independent copies of~the~multivariate model \eqref{model_y}, \eqref{model_x}:
\begin{equation*}
\left ( y_{i}, z_{i}, \xi _{i}, e_{i}, \epsilon _{i}, x_{i}, \delta _{i}
\right ),\quad  i = 1, \dots , n.
\end{equation*}
Based on~the observations
%
\begin{equation}
\label{observations}
\left (y_{i}, z_{i}, x_{i} \right ),\quad i = 1, \dots , n,
\end{equation}
and for~given $z_{0}$, $x_{0}$, we want to~estimate the~individual predictor
$\hat{y}_{0}$ presented in~\eqref{bestInd} and the~mean predictor
$\hat{\eta }_{0}$ presented in~\eqref{bestMean}.

Assume conditions \ref{indErrors} to~\ref{gaussErrors} and suppose
that all model parameters are unknown. Lemma~\ref{repr_y_lemma} implies
the~expansion~\eqref{repr_y} with~$\Exvtex u = 0$. All the~underlying random
vectors have finite 2nd moments, hence
%
\begin{equation}
\label{bestLinPred}
\hat{y}_{0} := b_{x} + C^{T} z_{0} + B^{T}_{x} x_{0}
\end{equation}
is the~best mean squared error predictor\index{predictor} of~$y_{0}$. Since it is unfeasible,
we have to~estimate the~coefficients $b_{x}$, $C$ and $B_{x}$ using the~sample~\eqref{observations}.
The~OLS estimator\index{OLS ! estimator} $\left (\hat{b}_{x}, \hat{C}, \hat{B}_{x} \right )$ minimizes
the~penalty function
\begin{equation*}
Q(b, C, B) := \sum ^{n}_{i=1}{\left \|  y_{i} - b - C^{T} z_{i} - B^{T} x_{i}
\right \|  ^{2}}, \quad b \in \mathbb{R}, \ C \in \mathbb{R}^{q
\times d}, \ B \in \mathbb{R}^{m \times d}.
\end{equation*}

Let bar denote the average over $i = 1, \dots , n$, e.g.,
\begin{equation*}
\bar{y} = \frac{1}{n}\sum ^{n}_{i=1}{y_{i}},
\end{equation*}
and $S_{uv}$ denote the~sample covariance matrix of $u$ and $v$ variables,
e.g.,
%
\begin{equation}
\label{Sample_Cov}
S_{xy} = \frac{1}{n}\sum ^{n}_{i=1}{\left ( x_{i} - \bar{x} \right )
\left (y_{i} - \bar{y} \right )^{T}}, \qquad S_{xx} = \frac{1}{n}\sum ^{n}_{i=1}{
\left ( x_{i} - \bar{x} \right ) \left (x_{i} - \bar{x} \right )^{T}},
\end{equation}
etc. The~OLS estimator\index{OLS ! estimator} can be computed from the~relations~\cite{VV91}
%
\begin{gather}
\label{OLS}
\bar{y} = \hat{b}_{x} + \hat{C}^{T} \bar{z} + \hat{B}^{T}_{x} \bar{x},
\\
\label{OLS_repr}
\begin{pmatrix}
\hat{C}
\\
\hat{B}_{x}
\end{pmatrix}
= S^{+}_{rr} S_{ry}, \qquad r :=
\begin{pmatrix}
z
\\
x
\end{pmatrix}
.
\end{gather}
Hereafter $A^{+}$ is the~pseudo-inverse of a~square matrix $A$; see the
properties of $A^{+}$ in \cite{Sl03}. The corresponding OLS predictor\index{OLS ! predictor} is
%
\begin{equation}
\label{indivPred}
\tilde{y}_{0} := \hat{b}_{x} + \hat{C}^{T} z_{0} + \hat{B}^{T}_{x} x_{0} .
\end{equation}

\begin{thm}
\label{OLSpred_thm}
Assume conditions \emph{\ref{indErrors}} to \emph{\ref{gaussErrors}}. Then
$\tilde{y}_{0}$ presented in~\eqref{indivPred} is a strongly consistent
estimator of~the~best predictor $\hat{y}_{0}$, i.e.
$\tilde{y}_{0} \to \hat{y}_{0} \text{ a.s. as }n$ tends to infinity. Moreover,
%
\begin{equation}
\label{OLSpred_cons}
\forall \tau > 0, \quad  \Prob (\left . \left \|  \tilde{y}_{0} - \hat{y}_{0}
\| > \tau \right | z_{0}, x_{0} \right ) \to 0\quad  \text{a.s. as }n\to
\infty .
\end{equation}
\end{thm}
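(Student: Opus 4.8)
The plan is to reduce the problem to ordinary-least-squares consistency in a \emph{well-specified} linear model, using the representation supplied by Lemma~\ref{repr_y_lemma}. That lemma rewrites \eqref{model_y} as $y = b_x + C^T z + B_x^T x + u$ with $z$, $x$, $u$ independent and $\Exvtex u = 0$; in particular $u$ is uncorrelated with the observable regressor $r := (z^T, x^T)^T$. First I would record that in the structural model $z$, $\xi$, $\delta$ are mutually independent, so $z$ is independent of $x = \xi + \delta$ and the joint covariance
\begin{equation*}
\Sigma_{rr} := \Cov(r) = \operatorname{block-diag}(\Sigma_z, \Sigma_x)
\end{equation*}
is block-diagonal, hence positive definite by assumption~\ref{nonsingRegr}.

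The core of the argument is the strong law of large numbers. Since the copies $(y_i, z_i, x_i)$ are i.i.d. with finite second moments, $\bar z$, $\bar x$, $\bar y$ converge a.s. to $\Exvtex z$, $\mu$, $\Exvtex y$, while the sample covariances satisfy $S_{rr} \to \Sigma_{rr}$ and $S_{ry} \to \Sigma_{ry} := \Cov(r, y)$ a.s. Setting $\theta := \begin{pmatrix} C \\ B_x \end{pmatrix}$ and using $u \perp r$, $\Exvtex u = 0$, one projects the representation onto $r$ to obtain the population normal equation $\Sigma_{ry} = \Sigma_{rr}\theta$, so $\theta = \Sigma_{rr}^{-1}\Sigma_{ry}$. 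Because $\Sigma_{rr} > 0$ and $S_{rr} \to \Sigma_{rr}$, the matrix $S_{rr}$ is invertible for all large $n$; there $S_{rr}^+ = S_{rr}^{-1}$, and matrix inversion is continuous at the nonsingular $\Sigma_{rr}$, whence
\begin{equation*}
\begin{pmatrix} \hat C \\ \hat B_x \end{pmatrix} = S_{rr}^+ S_{ry} \longrightarrow \Sigma_{rr}^{-1}\Sigma_{ry} = \theta \quad \text{a.s.}
\end{equation*}
Substituting $\hat C \to C$, $\hat B_x \to B_x$ into the intercept identity~\eqref{OLS} gives $\hat b_x = \bar y - \hat C^T\bar z - \hat B_x^T\bar x \to \Exvtex y - C^T\Exvtex z - B_x^T\mu = b_x$ a.s. Feeding the three limits into~\eqref{indivPred} then yields $\tilde y_0 \to b_x + C^T z_0 + B_x^T x_0 = \hat y_0$ a.s., which is the first assertion.

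For the conditional statement~\eqref{OLSpred_cons} I would exploit that $(\hat b_x, \hat C, \hat B_x)$ are functions of the sample~\eqref{observations} only, hence independent of the fresh pair $(z_0, x_0)$. Writing $\tilde y_0 - \hat y_0 = (\hat b_x - b_x) + (\hat C - C)^T z_0 + (\hat B_x - B_x)^T x_0$ and fixing $(z_0, x_0)$ at any realized value, the a.s. convergence of the coefficients forces the right-hand side to $0$ a.s. over the sample, hence in probability. By the independence just noted, this in-probability statement is precisely $\Prob(\|\tilde y_0 - \hat y_0\| > \tau \mid z_0, x_0)$, which therefore tends to $0$ for a.e. realization of $(z_0, x_0)$.

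I expect the only delicate point to be the pseudo-inverse: the map $A \mapsto A^+$ is discontinuous at singular matrices, so the proof must lean on the nonsingularity of the \emph{limit} $\Sigma_{rr}$ to ensure that $S_{rr}$ is eventually invertible and that $S_{rr}^+$ agrees with the ordinary inverse along the sequence. Everything else is a direct consequence of the strong law of large numbers and the well-specified structure handed over by Lemma~\ref{repr_y_lemma}.
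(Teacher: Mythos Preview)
Your proof is correct and follows essentially the same route as the paper: invoke Lemma~\ref{repr_y_lemma} to obtain a well-specified linear model, apply the strong law of large numbers to $S_{rr}$ and $S_{ry}$ (using that $\Sigma_{rr}=\operatorname{block-diag}(\Sigma_z,\Sigma_x)>0$) to get strong consistency of $(\hat C,\hat B_x)$ and then of $\hat b_x$, and conclude via \eqref{indivPred} and \eqref{bestLinPred}. Your write-up is in fact more explicit than the paper's on the pseudo-inverse issue and on how independence of the sample from $(z_0,x_0)$ yields the conditional statement \eqref{OLSpred_cons}, but the underlying argument is the same.
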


\begin{proof}
By~Strong Law of~Large Numbers we have a.s. as~$n \to \infty $:
\begin{gather*}
S_{rr} \to \operatorname{block-diag} \left ( \Sigma _{z}, \Sigma _{x}
\right ) > 0,
\\
S_{ry} \to
\begin{pmatrix}
\Cov \left (z, y \right )
\\
\Cov \left (x, y \right )
\end{pmatrix}
=
\begin{pmatrix}
\Sigma _{z}\cdot C
\\
\Sigma _{x}\cdot B_{x}
\end{pmatrix}
,
\\
\begin{pmatrix}
\hat{C}
\\
\hat{B}_{x}
\end{pmatrix}
\to
\begin{pmatrix}
\Sigma _{z}^{-1} \Sigma _{z}\cdot C
\\
\Sigma _{x}^{-1} \Sigma _{x}\cdot B_{x}
\end{pmatrix}
=
\begin{pmatrix}
C
\\
B_{x}
\end{pmatrix}
.
\end{gather*}
This convergence, relation~\eqref{OLS} and the~a.s. convergence of~the~sample
means imply that $\hat{b}_{x} \to b_{x} \text{ a.s.}$ Now, both statements
of Theorem~\ref{OLSpred_thm} follow from~\eqref{indivPred} and \eqref{bestLinPred}.
\end{proof}

It is interesting to~construct an asymptotic confidence region for~the~response
$y_{0}$ based on~the~OLS predictor.\index{OLS ! predictor} Assume \ref{indErrors} to~\ref{gaussErrors}.
It holds
\begin{equation*}
\Cov \left ( \left . y_{0} - \hat{y}_{0} \right | z_{0}, x_{0} \right )
= \Cov (u_{0}) = \Sigma _{u},
\end{equation*}
see \eqref{Cov_u}. Introduce the~estimator
\begin{equation*}
\hat{\Sigma }_{u} = \frac{1}{n} \sum ^{n}_{i=1} \left ( y_{i} - \hat{b}_{x}
- \hat{C}^{T}z_{i}-\hat{B}_{x}^{T} x_{i} \right ) \left ( y_{i} -
\hat{b}_{x} - \hat{C}^{T}z_{i}-\hat{B}_{x}^{T} x_{i} \right )^{T}.
\end{equation*}

\begin{thm}
\label{strong_cons_thm}
Suppose that conditions \emph{\ref{indErrors}} to \emph{\ref{gaussErrors}} hold.
Fix the~confidence pro\-bability $1-\alpha $.
\renewcommand{\labelenumi}{(\alph{enumi})}
\begin{enumerate}
\item Assume additionally \emph{\ref{nonsingU_assump}} and define
%
\begin{equation}
\label{reg1}
E_{\alpha }= \left \{  h \in \mathbb{R}^{d}: \left \|  \left (
\hat{\Sigma }_{u}^{+} \right )^{1\!/2} \left (h - \tilde{y}_{0} \right )
\right \|  ^{2} \leq \frac{d}{\alpha } \right \}  .
\end{equation}
Then
%
\begin{equation}
\label{reg1_assert}
\liminf _{n \to \infty } \Prob \left ( \left . y_{0} \in E_{\alpha
}\right | z_{0}, x_{0} \right ) \geq 1 - \alpha .
\end{equation}
\item Let the~model \xch{\eqref{model_y}--\eqref{model_x}}{\eqref{model_y} -- \eqref{model_x}} be purely normal,
i.e. $z$ is normally distributed and $e = 0$. Assume additionally that
the~matrix~\eqref{V12} is nonsingular. Define
%
\begin{equation}
\label{reg2}
D_{\alpha }= \left \{  h \in \mathbb{R}^{d}: \left \|  \left (
\hat{\Sigma }_{u}^{+} \right )^{1\!/2} \left (h - \tilde{y}_{0} \right )
\right \|  ^{2} \leq \chi ^{2}_{d\alpha } \right \}  ,
\end{equation}
where $\chi ^{2}_{d\alpha }$ is an upper $\alpha $-quantile of~$\chi ^{2}_{d}$
distribution, i.e.
$\Prob \left ( \chi ^{2}_{d} > \chi ^{2}_{d\alpha } \right ) =
\alpha $. Then
%
\begin{equation}
\label{reg2_assert}
\lim _{n \to \infty } \Prob \left ( \left . y_{0} \in D_{\alpha
}\right | z_{0}, x_{0} \right ) = 1 - \alpha .
\end{equation}
\end{enumerate}
\end{thm}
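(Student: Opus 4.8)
The plan is to studentize the prediction error and reduce both claims to the limiting behaviour of the quadratic form $u_0^{T}\Sigma_u^{-1}u_0$. First I would assemble two consistency facts. From Theorem~\ref{OLSpred_thm} we already have $\tilde y_0\to\hat y_0$ a.s. Next I would prove that the residual second-moment matrix is consistent, $\hat\Sigma_u\to\Sigma_u$ a.s.: writing the $i$-th OLS residual as $\hat u_i=u_i+(b_x-\hat b_x)+(C-\hat C)^{T}z_i+(B_x-\hat B_x)^{T}x_i$ and invoking the SLLN for $\frac{1}{n}\sum_i u_iu_i^{T}\to\Sigma_u$ together with $\frac{1}{n}\sum_i u_iz_i^{T}\to 0$ and $\frac{1}{n}\sum_i u_ix_i^{T}\to 0$ (here $\Exvtex u=0$ and the independence $u\perp(z,x)$ from Lemma~\ref{repr_y_lemma}(a) force these cross-moments to vanish), every cross term in $\frac{1}{n}\sum_i\hat u_i\hat u_i^{T}$ disappears because $\hat b_x\to b_x$, $\hat C\to C$, $\hat B_x\to B_x$ a.s. Under assumption~\ref{nonsingU_assump} (resp. nonsingularity of \eqref{V12}) Lemma~\ref{repr_y_lemma}(b) yields $\Sigma_u>0$, so $\Sigma_u^{+}=\Sigma_u^{-1}$; since $\hat\Sigma_u$ is then eventually nonsingular and both inversion and the positive-definite square root are continuous there, $(\hat\Sigma_u^{+})^{1/2}\to\Sigma_u^{-1/2}$ a.s.

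Second I would fix the new covariates $z_0,x_0$ and study $W_n:=(\hat\Sigma_u^{+})^{1/2}(y_0-\tilde y_0)$. Because $y_0-\tilde y_0=u_0+(\hat y_0-\tilde y_0)$ and, conditionally on $(z_0,x_0)$, the error $u_0$ is independent of the training sample with $\Exvtex[u_0\mid z_0,x_0]=0$ and $\Cov(u_0\mid z_0,x_0)=\Sigma_u$, the two consistency facts give, on a single a.s. training event (chosen independently of $(z_0,x_0,u_0)$) and for every value of $u_0$, the convergence $\|W_n\|^{2}\to u_0^{T}\Sigma_u^{-1}u_0=:\|W\|^{2}$. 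Hence $\|W_n\|^{2}$ converges in distribution to $\|W\|^{2}$ under the conditional law given $(z_0,x_0)$.

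For part (a) I would combine this with Markov's inequality. Since $\Exvtex[u_0^{T}\Sigma_u^{-1}u_0\mid z_0,x_0]=\operatorname{tr}(\Sigma_u^{-1}\Sigma_u)=d$, Markov gives $\Prob(\|W\|^{2}\ge d/\alpha\mid z_0,x_0)\le\alpha$, i.e. $\Prob(\|W\|^{2}<d/\alpha\mid z_0,x_0)\ge 1-\alpha$. Applying the portmanteau theorem to the \emph{open} set $\{t<d/\alpha\}$ and using $\{\|W_n\|^{2}\le d/\alpha\}\supseteq\{\|W_n\|^{2}<d/\alpha\}$ yields $\liminf_n\Prob(\|W_n\|^{2}\le d/\alpha\mid z_0,x_0)\ge\Prob(\|W\|^{2}<d/\alpha\mid z_0,x_0)\ge 1-\alpha$, which is \eqref{reg1_assert}. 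For part (b) the purely normal assumption $e=0$ forces $u=B^{T}\gamma_1+\gamma_2$, Gaussian since $(\gamma_1,\gamma_2)$ are jointly normal in the proof of Lemma~\ref{repr_y_lemma}, while nonsingularity of \eqref{V12} gives $\Sigma_u>0$; thus $\|W\|^{2}=u_0^{T}\Sigma_u^{-1}u_0\sim\chi^{2}_{d}$, whose distribution function is continuous at $\chi^{2}_{d\alpha}$. Consequently $\Prob(\|W_n\|^{2}\le\chi^{2}_{d\alpha}\mid z_0,x_0)\to\Prob(\chi^{2}_{d}\le\chi^{2}_{d\alpha})=1-\alpha$, which is \eqref{reg2_assert}.

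The step I expect to require the most care is the passage from the a.s. convergence $\|W_n\|^{2}\to\|W\|^{2}$ to the conditional coverage statements: one must check that the a.s. training event does not depend on $(z_0,x_0,u_0)$, so that the convergence holds a.s. under the product (conditional) law and thereby in distribution, and one must argue through the open set $\{t<d/\alpha\}$ rather than the closed region defining $E_\alpha$ in order to get the inequality pointing the right way in part~(a); in part~(b) the continuity of the $\chi^{2}_{d}$ law at $\chi^{2}_{d\alpha}$ upgrades the one-sided bound to an exact limit.
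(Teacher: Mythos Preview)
Your proposal is correct and follows essentially the same route as the paper: establish $\hat\Sigma_u\to\Sigma_u$ a.s.\ from the strong consistency of $(\hat b_x,\hat C,\hat B_x)$, then combine this with $\tilde y_0\to\hat y_0$ a.s.\ and apply Markov's inequality to $\|\Sigma_u^{-1/2}u_0\|^2$ for part~(a) and the exact $\chi^2_d$ law of $\|\Sigma_u^{-1/2}u_0\|^2$ for part~(b). Your write-up is in fact more explicit than the paper's on two technical points---the portmanteau argument through the open set $\{t<d/\alpha\}$ to secure the correct direction of the $\liminf$ inequality, and the independence of the a.s.\ training event from $(z_0,x_0,u_0)$---both of which the paper elides with the phrase ``the relations follow.''
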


\begin{proof}
If $b_{x}$, $C$, and $B_{x}$ were known, then we could approximate
$\Sigma _{u}$ as~follows:
%
\begin{gather}
\label{Sigma_u_approx}
\frac{1}{n} \sum ^{n}_{i=1} u_{i} u^{T}_{i} \to \Sigma _{u}\quad
\text{a.s. as }n \to \infty ,
\\
u_{i} := y_{i} - b_{x} - C^{T} z_{i} - B_{x}^{T} x_{i}.
\nonumber
\end{gather}
Since $u_{i} u^{T}_{i}$ is a~quadratic function of~the~coefficients
$b_{x}$, $C$, $B_{x}$, and the~OLS estimators\index{OLS ! estimator} of~those coefficients are
strongly consistent, the~convergence~\eqref{Sigma_u_approx} remains valid
if we replace all $u_{i}$ with~the~residuals
\begin{equation*}
\hat{u}_{i} := y_{i} - \hat{b}_{x} - C^{T} z_{i} - \hat{B}^{T}_{x} x_{i}.
\end{equation*}
Hence
%
\begin{equation}
\label{Sigma_u_conv}
\hat{\Sigma }_{u}\to \Sigma _{u}\quad \text{a.s. as }n \to \infty .
\end{equation}

(a) Under \ref{nonsingU_assump}, $\Sigma _{u}$ is nonsingular by~Lemma~\ref{repr_y_lemma}(b).
It holds
\begin{equation*}
\Prob \left ( \left .\left \|  \Sigma _{u}^{-1\!/2} \left ( y_{0} -
\hat{y}_{0}\xch{\right )}{) \right )} \right \|  ^{2} > \frac{d}{\alpha } \right | z_{0},
x_{0} \right ) \leq \alpha
\frac{\Exvtex \left \|  \Sigma _{u}^{-1\!/2} \cdot u \right \|  ^{2}}{d} =
\alpha .
\end{equation*}
Since the relations \eqref{OLSpred_cons} and \eqref{Sigma_u_conv} hold true,
the~relations \eqref{reg1_assert}, \eqref{reg1} follow.

(b) Again, in~this purely normal model the~matrix $\Sigma _{u}$ is nonsingular;
conditional on~$z_{0}$ and $x_{0}$, the difference
$y_{0} - \hat{y}_{0} = u_{0}$ has the normal distribution
$\mathcal{N} \left ( 0, \Sigma _{u}\right )$. Then
\begin{equation*}
\Prob \left ( \left . \left \|  \Sigma _{u}^{-1\!/2} \left (y_{0} -
\hat{y}_{0} \right ) \right \|  ^{2} > \chi ^{2}_{d\alpha } \right | z_{0},
x_{0} \right ) = \alpha .
\end{equation*}
Since the relations \eqref{Sigma_u_conv} and \eqref{OLSpred_cons} hold true,
the~relations \eqref{reg2_assert}, \eqref{reg2} follow.
\end{proof}

\begin{remark}
For~the~univariate model with $d \,{=}\, m \,{=}\, 1$, assume the conditions of Lemma~\ref{repr_y_Univ_lemma}.
Then relations \eqref{reg1_assert}, \eqref{reg1} hold true. If additionally
$z = 0$ and $e = 0$ then relations \eqref{reg2_assert} and \eqref{reg2} are valid.
\end{remark}

\subsection{Mean prediction}
\label{sec2.4}

Still consider the~model~\eqref{model_y}, \eqref{model_x} under conditions \ref{indErrors} to \ref{gaussErrors}. We want to~estimate the~mean
predictor $\hat{\eta }_{0}$ presented in~\eqref{bestMean}. We have
\begin{gather*}
\hat{\eta }_{0} = \hat{y}_{0} - \Exvtex \left [ \left . e_{0} \right | z_{0},
x_{0} \right ] - \Exvtex \left [ \left . \epsilon _{0} \right | z_{0}, x_{0}
\right ],
\\
\Exvtex \left [ \left . e_{0} \right | z_{0}, x_{0} \right ] = \Exvtex e_{0} =
0,
\end{gather*}
and by~\eqref{eps_gamma},
\begin{equation*}
\Exvtex \left [ \left . \epsilon _{0} \right | z_{0}, x_{0} \right ] =
\Exvtex \left [ \left . \epsilon _{0} \right | x_{0} \right ] = \Sigma _{
\epsilon \delta }\Sigma _{x}^{-1} \left (x_{0} - \mu \right ).
\end{equation*}
Thus,
%
\begin{equation}
\label{mean_est}
\hat{\eta }_{0} = \hat{y}_{0} - \Sigma _{\epsilon \delta }\Sigma _{x}^{-1}
\left (x_{0} - \mu \right ).
\end{equation}

Based on~observations~\eqref{observations}, strongly consistent and unbiased
estimators of~$\mu $ and $\Sigma _{x}$ are as follows:
%
\begin{gather}
\label{est_mu}
\hat{\mu }= \bar{x} = \frac{1}{n} \sum ^{n}_{i=1}x_{i},
\\
\label{est_Sigma_x}
\hat{\Sigma }_{x}= \frac{1}{n-1} \sum ^{n}_{i=1} \left (x_{i} - \bar{x}
\right ) \left (x_{i} - \bar{x} \right )^{T}.
\end{gather}

\begin{thm}
\label{Meanpred_thm}
Assume conditions \emph{\ref{indErrors}} to~\emph{\ref{gaussErrors}} and suppose
that $\Sigma _{\epsilon \delta }$ is the~only model parameter which is
known. Consider the~estimators~\eqref{indivPred}, \eqref{est_mu}, and \eqref{est_Sigma_x}. Then
\begin{equation*}
\tilde{\eta }_{0} := \tilde{y}_{0} - \Sigma _{\epsilon \delta }
\hat{\Sigma }_{x}^{-1} \left (x_{0} - \hat{\mu }\right )
\end{equation*}
is a strongly consistent estimator of~the~mean predictor~\eqref{bestMean},
and moreover
\begin{equation*}
\forall \tau > 0,\quad  \Prob \left ( \left . \left \|  \tilde{\eta }_{0} -
\hat{\eta }_{0} \right \|  > \tau \right | z_{0}, x_{0} \right ) \to 0\quad
\text{a.s. as }n \to \infty .
\end{equation*}
\end{thm}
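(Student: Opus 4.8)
The plan is to prove convergence of $\tilde\eta_0$ by decomposing the error into the three pieces that distinguish it from $\hat\eta_0$, and showing each piece vanishes almost surely. Recall from \eqref{mean_est} that $\hat\eta_0 = \hat y_0 - \Sigma_{\epsilon\delta}\Sigma_x^{-1}(x_0-\mu)$, while the estimator subtracts $\Sigma_{\epsilon\delta}\hat\Sigma_x^{-1}(x_0-\hat\mu)$ from $\tilde y_0$. So I would write
\begin{equation*}
\tilde\eta_0 - \hat\eta_0 = (\tilde y_0 - \hat y_0) - \Sigma_{\epsilon\delta}\bigl[\hat\Sigma_x^{-1}(x_0-\hat\mu) - \Sigma_x^{-1}(x_0-\mu)\bigr].
\end{equation*}
The first bracketed term, $\tilde y_0-\hat y_0$, is exactly the individual-prediction error already controlled by Theorem~\ref{OLSpred_thm}: it tends to $0$ a.s., and moreover satisfies the conditional-probability convergence \eqref{OLSpred_cons}. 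So all the new work concerns the second term, which involves only the plug-in of $\hat\mu$ for $\mu$ and $\hat\Sigma_x^{-1}$ for $\Sigma_x^{-1}$.

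The key observation is that $\Sigma_{\epsilon\delta}$ is assumed known, so it is a fixed (nonrandom) matrix and I only need to handle the convergence of $\hat\Sigma_x^{-1}(x_0-\hat\mu)$ to $\Sigma_x^{-1}(x_0-\mu)$ for fixed $x_0$. First I would invoke the Strong Law of Large Numbers: by \eqref{est_mu}, $\hat\mu=\bar x\to\mu$ a.s., and by \eqref{est_Sigma_x}, $\hat\Sigma_x\to\Sigma_x$ a.s. (the factor $n/(n-1)\to1$ is harmless). Since $\Sigma_x$ is nonsingular by assumption \ref{nonsingRegr}, for all $n$ large enough $\hat\Sigma_x$ is invertible, and matrix inversion is continuous at a nonsingular point, so $\hat\Sigma_x^{-1}\to\Sigma_x^{-1}$ a.s. The product $\hat\Sigma_x^{-1}(x_0-\hat\mu)$ is then a continuous function of the a.s.-convergent quantities $(\hat\Sigma_x,\hat\mu)$ evaluated at fixed $x_0$, so it converges a.s. to $\Sigma_x^{-1}(x_0-\mu)$ by the continuous mapping principle. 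Multiplying by the constant matrix $\Sigma_{\epsilon\delta}$ preserves this, giving the a.s. convergence of the second term to $0$.

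Combining the two terms gives $\tilde\eta_0-\hat\eta_0\to0$ a.s., which is the first assertion. For the conditional-probability statement I would note that the second term is nonrandom given $(z_0,x_0)$ once conditioned, or more precisely that it depends on the data only through $(\hat\mu,\hat\Sigma_x)$, which are independent of $(z_0,x_0)$; hence its a.s. convergence to $0$ already forces the conditional probability of $\{\|\cdot\|>\tau\}$ to tend to $0$. Adding the first term, whose conditional convergence is \eqref{OLSpred_cons}, and using the triangle inequality together with the union bound (splitting $\tau$ into $\tau/2+\tau/2$), yields the claimed
\begin{equation*}
\Prob\bigl(\,\|\tilde\eta_0-\hat\eta_0\|>\tau \,\big|\, z_0,x_0\,\bigr)\to0\quad\text{a.s.}
\end{equation*}
I do not anticipate a serious obstacle here, since the result is essentially a stability/plug-in argument riding on Theorem~\ref{OLSpred_thm}. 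The only point requiring mild care is the transition from unconditional a.s. convergence to the conditional-probability statement for the $\hat\Sigma_x^{-1}(x_0-\hat\mu)$ term; this is clean because the estimators are built from the sample \eqref{observations} and are independent of the new observation $(z_0,x_0,y_0)$, so conditioning on $(z_0,x_0)$ leaves the estimator randomness untouched and a.s. convergence transfers directly.
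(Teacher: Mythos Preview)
Your proposal is correct and follows essentially the same approach as the paper: the paper's proof simply cites relation~\eqref{mean_est}, the strong consistency of $\tilde y_0$ (Theorem~\ref{OLSpred_thm}, though the paper cites Theorem~\ref{strong_cons_thm}), and the strong consistency of $\hat\mu$ and $\hat\Sigma_x$. Your decomposition and continuous-mapping argument just make these steps explicit.
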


\begin{proof}
The~statement follows from relation~\eqref{mean_est}, Theorem~\ref{strong_cons_thm},
and the~strong consistency of~the~estimators~$\hat{\mu }$ and
$\hat{\Sigma _{x}}$.
\end{proof}

Notice that more model parameters should be known in~order to~construct
a~confidence region for~$\eta _{0}$ around $\tilde{\eta }_{0}$.

\section{Prediction\index{prediction} in a polynomial EIV model\index{polynomial EIV model}}
\label{Poly_EIV}

\subsection{Model and main assumptions}
\label{sec3.1}

For~a fixed and known $k \geq 2$, consider a~polynomial EIV model\index{polynomial EIV model} (structural
case):
%
\begin{gather}
\label{model_y_Pol}
y = c^{T} z + \beta _{0} + \beta ^{T} \left (\xi , \xi ^{2}, \dots ,
\xi ^{k} \right )^{T} + e + \epsilon ,
\\
\label{model_x_Pol}
x = \xi + \delta .
\end{gather}
Here the random variable (r.v.) $y$ is the~response variable;\index{response variable} the random vector $z$ is the~observable
covariate distributed in~$\mathbb{R}^{q}$, r.v. $\xi $ is the~unobservable covariate; $x$ is the~surrogate data observed instead
of~$\xi $; $e$ is the~random error in~the~equation, $\epsilon $ and
$\delta $ are the measurement errors\index{measurement error} in~the~response and in~the~latent covariate;\index{latent covariate}
$c \in \mathbb{R}^{q}$, $\beta _{0} \in \mathbb{R}$ and
$\beta = \left ( \beta _{1}, \dots , \beta _{k} \right )^{T} \in
\mathbb{R}^{k}$ contain unknown regression parameters; $\epsilon $ and
$\delta $ can be correlated.

Such models are studied, e.g., in~\cite{CS98,KT16} and applied,
for instance, in econo\-metrics. Let us introduce the model assumptions.
\begin{enumerate}[label={\bf(\alph*)},ref=(\alph*),align=left,leftmargin=*]
\item\label{indErrors_Pol} The random variables $\xi $,
$e$ and random vectors $z$, $\left ( \epsilon , \delta \right )^{T}$ are
independent, with finite 2nd moments; the random variables $\epsilon $ and
$\delta $ can be correlated.
\item\label{nonsingRegr_Pol} The covariance matrix
$\Sigma _{z}:= \Cov (z)$ is nonsingular, and
$\sigma ^{2}_{x} := \Var (x) > 0$.
\item\label{zeromeanErrs_Pol} The errors $e$, $\epsilon $ and
$\delta $ have zero mean.
\item\label{gaussErrors_Pol} The errors $\epsilon $,
$\delta $ and $\xi $ are jointly Gaussian.
\end{enumerate}

We see that assumptions \ref{indErrors_Pol} to~\ref{gaussErrors_Pol}
are similar to~conditions~\ref{indErrors} to~\ref{gaussErrors} imposed
on~the~multivariate linear model, but now the~response and latent covariate\index{latent covariate}
are real valued.

\subsection{Regression $y$\index{regression} on~$z$ and $x$}
\label{sec3.2}

Let us denote
%
\begin{equation}
\label{def_sigma}
\sigma _{\epsilon \delta }= \Exvtex \epsilon \delta , \qquad \mu = \Exvtex x,
\qquad \sigma ^{2}_{\xi }= \Var (\xi ), \qquad \sigma ^{2}_{e}= \Var (e),
\qquad \sigma ^{2}_{\delta }= \Var (\delta ).
\end{equation}

\begin{lemma}
\label{repr_y_Pol_lemma}
Assume conditions \emph{\ref{indErrors_Pol}} to~\emph{\ref{gaussErrors_Pol}}. Then
the~response variable\index{response variable}~\eqref{model_y_Pol} admits the~representation
%
\begin{equation}
\label{repr_y_Pol}
y = c^{T} z + \beta _{0x} + \beta ^{T}_{x} \left (x, x^{2}, \dots , x^{k}
\right )^{T} + u,
\end{equation}
where $z$ and $(x, u)^{T}$ are independent, the vector $c$ remains unchanged
compared\break  with~\eqref{model_y_Pol},
$\Exvtex \left [ \left . u \right | x \right ] = 0$,
$\Exvtex \left [ \left . u^{2} \right | x \right ] < \infty $, and
$\beta _{0x} \in \mathbb{R}$, $\beta _{x} \in \mathbb{R}^{k}$ are transformed
(nonrandom) parameters of~the polynomial regression.\index{polynomial regression}
\end{lemma}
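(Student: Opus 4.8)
The plan is to reduce the polynomial case to Gaussian conditioning, exactly as in the proofs of Lemmas~\ref{repr_y_lemma} and~\ref{repr_y_Univ_lemma}, and then exploit that moments of a normal variable are polynomials in its mean. First I would record the scalar conditional decomposition. Since $\xi$, $\delta$, $\epsilon$ are jointly Gaussian by \ref{gaussErrors_Pol} and $x=\xi+\delta$ is their linear combination, the triple $(\xi,\epsilon,x)$ is jointly Gaussian; the scalar analogues of \eqref{xi_gamma} and \eqref{eps_gamma} then give
\begin{equation*}
\xi = \frac{\sigma_\delta^2}{\sigma_x^2}\mu + \frac{\sigma_\xi^2}{\sigma_x^2}\,x + \gamma_1, \qquad \epsilon = \frac{\sigma_{\epsilon\delta}}{\sigma_x^2}(x-\mu) + \gamma_2,
\end{equation*}
where $(\gamma_1,\gamma_2)$ is a mean-zero Gaussian pair independent of $x$, and $\gamma_1$ has the constant variance $V_{11}=\sigma_\xi^2\sigma_\delta^2/\sigma_x^2$ (here $\sigma_x^2>0$ by \ref{nonsingRegr_Pol}). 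Writing $m(x)=\sigma_\delta^2\sigma_x^{-2}\mu+\sigma_\xi^2\sigma_x^{-2}x$ for the affine conditional mean of $\xi$, I also note $\Exvtex[e\mid x]=\Exvtex e=0$ because $e$ is independent of the remaining variables by \ref{indErrors_Pol}.

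The crucial step is to compute $\Exvtex[\xi^j\mid x]$ for $j=1,\dots,k$. Substituting $\xi=m(x)+\gamma_1$ and using $\gamma_1\perp x$ together with the Gaussian moments $\Exvtex\gamma_1^{2i}=(2i-1)!!\,V_{11}^{\,i}$ and $\Exvtex\gamma_1^{2i+1}=0$, I obtain
\begin{equation*}
\Exvtex\bigl[\xi^j\mid x\bigr] = \Exvtex\bigl[(m(x)+\gamma_1)^j\bigr] = \sum_{i=0}^{\lfloor j/2\rfloor}\binom{j}{2i}(2i-1)!!\,V_{11}^{\,i}\,m(x)^{j-2i}.
\end{equation*}
Since $m(x)$ is affine in $x$ and $V_{11}$ does not depend on $x$, each $\Exvtex[\xi^j\mid x]$ is a polynomial in $x$ of degree $j$. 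I expect this to be the heart of the argument: it is precisely the constancy of the conditional variance, a feature special to the joint Gaussian law, that forces the conditional moments to be polynomials rather than merely measurable functions of $x$.

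Summing over $j$, the expression $\sum_{j=1}^k\beta_j\,\Exvtex[\xi^j\mid x]$ is a polynomial in $x$ of degree at most $k$; adding the constant $\beta_0$, the vanishing term $\Exvtex[e\mid x]$, and the affine term $\Exvtex[\epsilon\mid x]=\sigma_{\epsilon\delta}\sigma_x^{-2}(x-\mu)$ produces
\begin{equation*}
g(x) := \Exvtex\bigl[\,y-c^T z\mid x\,\bigr] = \beta_{0x}+\beta_x^T\bigl(x,x^2,\dots,x^k\bigr)^T
\end{equation*}
for suitable nonrandom $\beta_{0x}\in\mathbb{R}$ and $\beta_x\in\mathbb{R}^k$. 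Here I use that $y-c^Tz=\beta_0+\sum_{j}\beta_j\xi^j+e+\epsilon$ contains no $z$, so $g$ is genuinely a function of $x$ alone, with $c$ unchanged as required.

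Finally I would set $u:=y-c^Tz-g(x)$, which gives \eqref{repr_y_Pol} by construction. The identity $\Exvtex[y-c^Tz\mid x]=g(x)$ yields $\Exvtex[u\mid x]=0$ at once. For $\Exvtex[u^2\mid x]<\infty$ it suffices that $u$ has finite unconditional second moment: $u=\beta_0+\sum_j\beta_j\xi^j+e+\epsilon-g(x)$, and $\xi$ and $x$ are Gaussian (so $\xi^j$ and the polynomial $g(x)$ have finite second moments), while $e$ and $\epsilon$ have finite variances by \ref{indErrors_Pol}. Independence of $z$ and $(x,u)^T$ follows because both $x=\xi+\delta$ and $u$ are measurable functions of $(\xi,e,\epsilon,\delta)$, and this group is independent of $z$ by \ref{indErrors_Pol}. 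The only point needing separate care is the degenerate case $\sigma_\xi^2=0$, where $\xi=\mu$ almost surely and the formulas collapse ($\gamma_1=0$, $V_{11}=0$), the representation then reducing to the situation already treated in Lemma~\ref{repr_y_Univ_lemma}.
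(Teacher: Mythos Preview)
Your argument is correct and follows essentially the same route as the paper: both proofs invoke the scalar Gaussian decomposition $\xi=m(x)+\gamma_1$, $\epsilon=\text{affine}(x)+\gamma_2$ with $(\gamma_1,\gamma_2)\perp x$, expand $\xi^j=(m(x)+\gamma_1)^j$ by the binomial theorem, and identify the conditional mean as a polynomial in $x$ because the moments of $\gamma_1$ are constants. Your write-up is in fact a bit more explicit than the paper's (you spell out the Gaussian moment formula, justify $\Exvtex[u^2\mid x]<\infty$ via $\Exvtex u^2<\infty$, and verify the independence of $z$ and $(x,u)$), but the underlying idea and structure are the same.
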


\begin{proof}
In~the new notation, we have from~\eqref{xi_gamma} and \eqref{eps_gamma}:
%
\begin{gather}
\label{xi_gamma_Pol}
\xi = \sigma ^{2}_{\delta }\sigma ^{-2}_{x} \mu + \sigma ^{2}_{\xi
}\sigma ^{-2}_{x} x + \gamma _{1} =: a + K x + \gamma _{1},
\\
\label{eps_gamma_Pol}
\epsilon = \sigma _{\epsilon \delta } \sigma ^{-2}_{x} (x - \mu ) +
\gamma _{2} =: b + f x + \gamma _{2},
\end{gather}
where $z$, $x$ and $(\gamma _{1}, \gamma _{2})^{T}$ are independent, and
$(\gamma _{1}, \gamma _{2})^{T}$ has the Gaussian distribution
$\mathcal{N} \left ( 0, V_{1|2} \right )$\xch{.}{,}

Now, substitute \eqref{xi_gamma_Pol} and \eqref{eps_gamma_Pol} into~\eqref{model_y_Pol}
and get
\begin{gather}
y = c^{T} z + \beta _{0} + \sum ^{k}_{j=1} \beta _{j} (a + K x +
\gamma _{1})^{j} + b + f x + e + \gamma _{2},
\nonumber
\\
\label{y_gamma}
y = c^{T} z + \beta _{0} + \sum ^{k}_{j=1} \beta _{j}\sum ^{j}_{p=0}
\binom{j}{p} (a + K x)^{j-p} \Exvtex \left [ \left . \gamma ^{p}_{1} \right |
x \right ] + b + f x + u,
\\
\label{u_gamma}
u = e + \gamma _{2} + \sum ^{k}_{j=1}\beta _{j} \sum ^{j}_{p=1}
\binom{j}{p} (a + K x)^{j-p} \left ( \gamma ^{p}_{1} - \Exvtex \left [
\left . \gamma ^{p}_{1} \right | x \right ] \right ).
\end{gather}
It holds $\Exvtex \left [ \left . u \right | x \right ]=0$,
$\Exvtex \left [ \left . u^{2} \right | x \right ] < \infty $, and
relations~\xch{\eqref{y_gamma}--\eqref{u_gamma}}{\eqref{y_gamma} -- \eqref{u_gamma}} imply the~statement.
\end{proof}

\subsection{Individual and mean prediction}
\label{sec3.3}

We consider independent copies of~the~polynomial
model~\xch{\eqref{model_y_Pol}--\eqref{model_x_Pol}}{\eqref{model_y_Pol} -- \eqref{model_x_Pol}}:
\begin{equation*}
\left ( y_{i}, z_{i}, \xi _{i}, e_{i}, \epsilon _{i}, x_{i}, \delta _{i}
\right ),\quad i = 1, \dots , n.
\end{equation*}
Based on~observations~\eqref{observations} and for~given $z_{0}$,
$x_{0}$, we want to~estimate the~individual predictor $\hat{y}_{0}$ and
the~mean predictor $\hat{\eta}_{0}$ for~the~polynomial model.

Assume conditions~\ref{indErrors_Pol} to~\ref{gaussErrors_Pol} and
suppose that all model parameters are unknown. Lemma~\ref{repr_y_Pol_lemma}
implies the~expansion~\eqref{repr_y_Pol} with~$\Exvtex \left [u | x, z \right ]=0$. All the~underlying r.v.'s
and the random vector $z$ have finite 2nd moments, hence
\begin{equation*}
\hat{y}_{0} := c^{T} z_{0} + \beta _{0x} + \beta ^{T}_{x} \left ( x_{0},
x^{2}_{0}, \dots , x^{k}_{0} \right )^{T}
\end{equation*}
is the~best mean squared error predictor of~$y_{0}$. We estimate the~coefficients
$c$, $\beta _{0x}$ and $\beta _{x}$ using the~sample~\eqref{observations}
from~the~polynomial model. The~OLS estimator minimizes the~penalty function
\begin{gather*}
Q \left ( c, \beta _{0}, \beta \right ) := \sum ^{n}_{i=1} \left ( y_{i}
- c^{T} z_{i} - \beta _{0} - \beta ^{T} \left ( x_{i}, x^{2}_{i},
\dots , x^{k}_{i} \right )^{T} \right )^{2},
\end{gather*}
$c \in \mathbb{R}^{q}$, $\beta _{0} \in \mathbb{R}$, $\beta \in
\mathbb{R}^{k}$. The~OLS estimator can be computed by~relations similar
to~\xch{\eqref{OLS}--\eqref{OLS_repr}}{\eqref{OLS} -- \eqref{OLS_repr}}:
\begin{gather}
\bar{y} = \hat{c}^{T} \bar{z} + \hat{\beta }_{0x} + \hat{\beta }^{T}_{x}
\left ( \overline{x}, \overline{x^{2}}, \dots , \overline{x^{k}}
\right )^{T},
\nonumber
\\
\label{OLS_repr_Pol}
\begin{pmatrix}
\hat{c}
\\
\hat{\beta }_{x}
\end{pmatrix}
= S^{+}_{rr} S_{ry}, \quad r := (z^{T}, x,\ldots , x^{k})^{T} ;
\end{gather}
the~sample covariance matrices $S_{rr}$ and $S_{ry}$ are defined in~\eqref{Sample_Cov}.
The~corresponding OLS predictor is
%
\begin{equation}
\label{indivPred_Pol}
\tilde{y}_{0} := \hat{c}^{T} z_{0} + \hat{\beta }_{0x} + \hat{\beta }^{T}_{x}
\left ( x_{0}, x^{2}_{0}, \dots , x^{k}_{0} \right )^{T}.
\end{equation}

\begin{thm}
\label{strong_cons_Pol_thm}
Assume conditions \emph{\ref{indErrors_Pol}} to~\emph{\ref{gaussErrors_Pol}}. Then
$\tilde{y}_{0}$ presented in~\eqref{indivPred_Pol} is a~strongly consistent
estimator of~the~individual predictor $\hat{y}_{0}$. Moreover,
\begin{equation*}
\forall \tau > 0, \; \Prob \left ( \left . \left \|  \tilde{y}_{0} -
\hat{y}_{0} \right \|  > \tau \right | z_{0}, x_{0} \right ) \to 0\quad
\text{a.s. as }n \to \infty .
\end{equation*}
\end{thm}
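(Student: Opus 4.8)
The plan is to mirror the proof of Theorem~\ref{OLSpred_thm}, using the representation~\eqref{repr_y_Pol} supplied by Lemma~\ref{repr_y_Pol_lemma} in place of~\eqref{repr_y}. Write $P(x) := (x, x^{2}, \dots , x^{k})^{T}$ and $r = (z^{T}, P(x)^{T})^{T}$. Since $x = \xi + \delta $ is Gaussian by assumption~\ref{gaussErrors_Pol}, all moments of $x$ are finite; combined with the finite second moments of $z$, $e$, $\epsilon $ this makes every entry of $S_{rr}$ and $S_{ry}$ integrable, so the Strong Law of Large Numbers applies. First I would record the a.s.\ limits $S_{rr} \to \Cov (r)$ and $S_{ry} \to \Cov (r, y)$. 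Because $z$ is independent of $x$, hence of $P(x)$, by~\ref{indErrors_Pol}, the limiting matrix $\Cov (r)$ is block-diagonal, namely $\operatorname{block-diag}(\Sigma _{z}, \Sigma _{P})$ with $\Sigma _{P} := \Cov (P(x))$.

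The main step, and the only genuinely new point compared with the linear case, is to verify that this limit is positive definite. Here $\Sigma _{z} > 0$ by~\ref{nonsingRegr_Pol}, so it remains to show $\Sigma _{P} > 0$. Suppose $a^{T} P(x)$ were a.s.\ constant for some nonzero $a \in \mathbb{R}^{k}$; then the polynomial $t \mapsto a_{1} t + \dots + a_{k} t^{k}$ would be constant on the support of the law of $x$. Since $x$ is Gaussian with $\sigma ^{2}_{x} > 0$, its support is all of $\mathbb{R}$, forcing $a = 0$, a contradiction. Hence the monomials $x, \dots , x^{k}$ are linearly independent in $L^{2}$, so $\Sigma _{P}$ is nonsingular and $\Cov (r) > 0$.

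Next I would evaluate $\Cov (r, y)$ from~\eqref{repr_y_Pol}. Independence of $z$ from $(x, u)$ annihilates the corresponding cross-covariances, while $\Exvtex [u \mid x] = 0$ gives $\Cov (P(x), u) = \Exvtex [P(x)\,\Exvtex [u \mid x]] = 0$; consequently
\[
\Cov (r, y) =
\begin{pmatrix}
\Sigma _{z}\, c \\ \Sigma _{P}\, \beta _{x}
\end{pmatrix}.
\]
Because the limit of $S_{rr}$ is nonsingular, the pseudo-inverse is continuous there, so $S^{+}_{rr} \to \Cov (r)^{-1}$ and, by~\eqref{OLS_repr_Pol},
\[
\begin{pmatrix}
\hat{c} \\ \hat{\beta }_{x}
\end{pmatrix}
\to \operatorname{block-diag}(\Sigma _{z}, \Sigma _{P})^{-1}
\begin{pmatrix}
\Sigma _{z}\, c \\ \Sigma _{P}\, \beta _{x}
\end{pmatrix}
=
\begin{pmatrix}
c \\ \beta _{x}
\end{pmatrix}
\quad \text{a.s.}
\]
Feeding this, together with the a.s.\ convergence of the sample means $\bar{z}$, $\overline{x^{j}}$, $\bar{y}$, into the centering relation $\bar{y} = \hat{c}^{T} \bar{z} + \hat{\beta }_{0x} + \hat{\beta }^{T}_{x}(\bar{x}, \dots , \overline{x^{k}})^{T}$ yields $\hat{\beta }_{0x} \to \beta _{0x}$ a.s.; substituting into~\eqref{indivPred_Pol} gives $\tilde{y}_{0} \to \hat{y}_{0}$ a.s., the claimed strong consistency.

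Finally, for the conditional assertion I would exploit that the estimators depend only on the sample $(y_{i}, z_{i}, x_{i})_{i=1}^{n}$, which is independent of the new subject $(z_{0}, x_{0})$. Writing $\tilde{y}_{0} - \hat{y}_{0} = (\hat{c} - c)^{T} z_{0} + (\hat{\beta }_{0x} - \beta _{0x}) + (\hat{\beta }_{x} - \beta _{x})^{T} P(x_{0})$, I would read the conditional probability at a fixed value $(z_{0}, x_{0}) = (\zeta , \chi )$ as an ordinary probability over the sample; the a.s.\ convergence of $\hat{c}$, $\hat{\beta }_{0x}$, $\hat{\beta }_{x}$ makes the inner expression tend to $0$ in probability for each fixed $(\zeta , \chi )$, which is precisely the asserted a.s.\ convergence of the conditional probability. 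I expect the positive definiteness of $\Sigma _{P}$ via the Gaussianity of $x$ to be the crux; everything else is a direct transcription of the linear argument.
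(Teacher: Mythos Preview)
Your proposal is correct and follows essentially the same route as the paper: reduce to strong consistency of $(\hat c,\hat\beta_x)$ via the SLLN limits of $S_{rr}$ and $S_{ry}$, with the key new ingredient being the nonsingularity of $\Cov(P(x))$, argued (as in the paper) from the linear independence of $1,x,\dots,x^{k}$ in $L_2$ for a nondegenerate Gaussian $x$. Your write-up is in fact slightly more explicit than the paper's in justifying $\Cov(P(x),u)=0$ from $\Exvtex[u\mid x]=0$ and in spelling out the conditional-probability step via independence of the sample from $(z_0,x_0)$.
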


\begin{proof}
Following the~lines of~the~proof of~Theorem~\ref{strong_cons_thm}, it is
enough to~check the~strong consistency of~the~estimators $\hat{c}$ and
$\hat{\beta }_{x}$. We have a.s. as $n \to \infty $:
%
\begin{gather}
\label{S_conv1}
S_{rr} \to \Cov (r) = \operatorname{block-diag} \left ( \Sigma _{z}, D
\right ), \quad D := \Cov (x, x^{2},\ldots x^{k}),
\\
\label{S_conv2}
S_{ry} \to \Cov (r,y) =
\begin{pmatrix}
\Sigma _{z}\cdot c
\\
D \beta _{x}
\end{pmatrix}
.
\end{gather}
By~conditions \ref{nonsingRegr_Pol} and \ref{gaussErrors_Pol},
$x$ is a~nondegenerate Gaussian r.v., therefore, r.v.'s\break
$1,x,\dots ,x^{k}$ are linearly independent in~the Hilbert space
$L_{2} \left ( \Omega , \Prob \right )$ of~square integrable r.v.'s, and
the~covariance matrix $D$ is nonsingular. Relations \eqref{OLS_repr_Pol}, \eqref{S_conv1}, and \eqref{S_conv2} imply that a.s.
as $n \to \infty$
\begin{equation*}
\begin{pmatrix}
\hat{c}
\\
\hat{\beta }_{x}
\end{pmatrix}
\to
\begin{pmatrix}
\Sigma _{z}^{-1} \Sigma _{z}c
\\
D^{-1} D \beta _{x}
\end{pmatrix}
=
\begin{pmatrix}
c
\\
\beta _{x}
\end{pmatrix}
.
\end{equation*}
And the statements of~Theorem~\ref{strong_cons_Pol_thm} follow.
\end{proof}

Similarly to~Theorem~\ref{Meanpred_thm} one can construct a~consistent
estimator of~the~mean predictor~\eqref{bestMean} in~the~polynomial EIV
model. The~strongly consistent estimator of~$\mu $ is given in~\eqref{est_mu}
and the~one~of~$\sigma ^{2}_{x}$ is constructed similarly to~\eqref{est_Sigma_x}:
%
\begin{equation}
\label{est_sigma_x}
\hat{\sigma }^{2}_{x} = \frac{1}{n-1} \sum ^{n}_{i=1} \left (x_{i} -
\bar{x} \right )^{2}.
\end{equation}

\begin{thm}
Assume conditions \emph{\ref{indErrors_Pol}} to~\emph{\ref{gaussErrors_Pol}} and
suppose that $\sigma _{\epsilon \delta }$ defined in~\eqref{def_sigma}
is the~only parameter which is known in~the~model~\eqref{model_y_Pol}, \eqref{model_x_Pol}. Consider the estimators \eqref{indivPred_Pol}, \eqref{est_mu}, and \eqref{est_sigma_x}. Then
\begin{equation*}
\tilde{\eta }_{0} := \tilde{y}_{0} - \sigma _{\epsilon \delta }
\hat{\sigma }_{x}^{-2} ( x_{0} - \hat{\mu })
\end{equation*}
is a~strongly consistent estimator of~the~mean predictor \eqref{bestMean}, and moreover,
\begin{equation*}
\forall \tau > 0, \quad  \Prob \left ( \left . \left \|  \tilde{\eta }_{0} -
\hat{\eta }_{0} \right \|  > \tau \right | z_{0}, x_{0} \right ) \to 0\quad
\text{a.s. as }n \to \infty .
\end{equation*}
\end{thm}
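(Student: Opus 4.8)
The plan is to follow the proof of Theorem~\ref{Meanpred_thm} verbatim in the polynomial setting: first I would obtain a closed expression for the mean predictor $\hat\eta_0$ of \eqref{bestMean}, and then split the estimation error $\tilde\eta_0-\hat\eta_0$ into the OLS prediction error, already controlled by Theorem~\ref{strong_cons_Pol_thm}, and a plug-in error governed by the strong law of large numbers.

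To derive the formula for $\hat\eta_0$, recall that $\eta_0=\Exvtex[y_0\mid z_0,\xi_0]$. Since $e$ and $\epsilon$ are centered and $(\epsilon,\delta)^T$ is independent of $(z,\xi)$ by~\ref{indErrors_Pol}, taking the conditional expectation of \eqref{model_y_Pol} given $(z_0,\xi_0)$ gives
\[
\eta_0=c^Tz_0+\beta_0+\beta^T\bigl(\xi_0,\xi_0^2,\dots,\xi_0^k\bigr)^T .
\]
Conditioning further on $(z_0,x_0)$ and using that $(\xi_0,x_0)$ is independent of $z_0$, the polynomial term turns into $\beta^T\Exvtex[(\xi_0,\dots,\xi_0^k)^T\mid x_0]$, which is precisely the term present in $\hat y_0=\Exvtex[y_0\mid z_0,x_0]$. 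Thus $\hat\eta_0$ and $\hat y_0$ differ only through the contribution of $\epsilon_0$: the term $e_0$ is independent of $(z_0,x_0)$ and drops out, whereas \eqref{eps_gamma_Pol} yields $\Exvtex[\epsilon_0\mid z_0,x_0]=\Exvtex[\epsilon_0\mid x_0]=\sigma_{\epsilon\delta}\sigma_x^{-2}(x_0-\mu)$. Hence
\[
\hat\eta_0=\hat y_0-\sigma_{\epsilon\delta}\sigma_x^{-2}(x_0-\mu),
\]
the exact polynomial analogue of \eqref{mean_est}.

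Next I would write
\[
\tilde\eta_0-\hat\eta_0=(\tilde y_0-\hat y_0)-\sigma_{\epsilon\delta}\bigl(\hat\sigma_x^{-2}(x_0-\hat\mu)-\sigma_x^{-2}(x_0-\mu)\bigr).
\]
The first bracket is handled by Theorem~\ref{strong_cons_Pol_thm}, both almost surely and in the stated conditional-probability sense. For the second bracket, the estimators \eqref{est_mu} and \eqref{est_sigma_x} satisfy $\hat\mu\to\mu$ and $\hat\sigma_x^2\to\sigma_x^2$ a.s.\ by the strong law of large numbers; since $\sigma_x^2>0$ by~\ref{nonsingRegr_Pol}, the estimator $\hat\sigma_x^2$ is eventually positive, so $\hat\sigma_x^{-2}(x_0-\hat\mu)\to\sigma_x^{-2}(x_0-\mu)$ a.s.\ for the fixed value $x_0$. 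As $\sigma_{\epsilon\delta}$ is a known constant, the whole second bracket tends to $0$ a.s. Combining the two pieces gives $\tilde\eta_0\to\hat\eta_0$ a.s., and the conditional-probability statement follows exactly as in Theorem~\ref{strong_cons_Pol_thm}, since the plug-in term is $(z_0,x_0)$-measurable and converges a.s.

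I do not expect any genuine obstacle; the argument is routine once the closed form for $\hat\eta_0$ is available. The one step deserving care is precisely that derivation --- checking that the polynomial contributions to $\hat y_0$ and to $\hat\eta_0$ coincide (which relies on the independence of $(\xi_0,x_0)$ from $z_0$) and that the conditional law of $\epsilon_0$ given $x_0$ is read off correctly from \eqref{eps_gamma_Pol}.
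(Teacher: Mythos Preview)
Your proposal is correct and matches the paper's approach exactly: the paper does not spell out a separate proof for this theorem but simply remarks that it is proved ``similarly to Theorem~\ref{Meanpred_thm}'', i.e.\ by deriving the polynomial analogue of~\eqref{mean_est} and then combining Theorem~\ref{strong_cons_Pol_thm} with the strong consistency of $\hat{\mu}$ and $\hat{\sigma}_x^{2}$. Your derivation of $\hat{\eta}_0=\hat{y}_0-\sigma_{\epsilon\delta}\sigma_x^{-2}(x_0-\mu)$ and the ensuing decomposition of $\tilde{\eta}_0-\hat{\eta}_0$ are precisely what that remark encodes, so there is nothing to add.
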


\subsection{Confidence interval for response in quadratic model}
\label{Conf_int_section}

Consider a~quadratic EIV model
%
\begin{gather}
\label{model_y_quad}
y = \beta _{0} + \beta _{1} \xi + \beta _{2} \xi ^{2} + e,
\\
\label{model_x_quad}
x = \xi + \delta .
\end{gather}
It is a~particular case of~the~model~\eqref{model_y_Pol}, \eqref{model_x_Pol} with~$k=2$, $z = 0$ and $\epsilon = 0$.

We use notations~\eqref{def_sigma}. Our~conditions are similar
to~\xch{\ref{indErrors_Pol}--\ref{gaussErrors_Pol}}{\eqref{indErrors_Pol} -- \eqref{gaussErrors_Pol}},
but we assume additionally that the~reliability ratio
%
\begin{equation}
\label{ratio_K}
K := \frac{\sigma ^{2}_{\xi }}{\sigma ^{2}_{x}}
\end{equation}
is separated away from~zero. Thus, assume the~following conditions.
\begin{enumerate}[label={\bf(\alph*)},ref=(\alph*),align=left,leftmargin=*,start=5]
\item\label{quad_var} The random variables $\xi $, $e$ and
$\delta $ are independent; $\xi $ and $\delta $ are Gaussian; $e$ and
$\delta $ have zero mean and $\sigma ^{2}_{e}< \infty $;
$\sigma ^{2}_{x}> 0$.
\item\label{quad_rel} Model parameters are unknown, but
a~lower bound $K_{0}$ for~the~reliability ratio~\eqref{ratio_K} is given,
with~$0 < K_{0} \leq 1\!/2$.
\end{enumerate}

Consider indepedent copies of~the~quadratic model
\begin{equation*}
\left (y_{i}, \xi _{i}, e_{i}, x_{i}, \delta _{i} \right ), \quad i = 0,
1, \dots , n.
\end{equation*}
Based on~observations $(y_{i}, x_{i}), \; i=1, \dots , n$, and for~a given
$x_{0}$, we can construct the~OLS predictor $\tilde{y}_{0}$, see \eqref{indivPred_Pol}, for~$y_{0}$ with~$k=2$, $z_{0} = 0$. Now, we show
the~way how to~construct an~asymptotic confidence interval for~$y_{0}$.
(In~a~similar way this can be done for~a~polynomial EIV model of~higher
order.)

First we write down the~representation~\eqref{y_gamma}, \eqref{u_gamma}. Denote
%
\begin{equation}
\label{m_x}
m_{x} = \Exvtex \left ( \left . \xi \right | x \right ) = K x + \left (1 -
K \right ) \mu .
\end{equation}
We have with independent $m_{x}$ and $\gamma $:
%
\begin{equation}
\label{xi_m_x}
\xi = m_{x} + \gamma , \quad \gamma \sim \mathcal{N} \left (0, K
\sigma ^{2}_{\delta }\right ).
\end{equation}
Then
%
\begin{gather}\label{repr_y_quad}
y = \beta _{0} + \beta _{1} ( m_{x} + \gamma ) + \beta _{2} (m_{x} +
\gamma )^{2} + e=\qquad\qquad\qquad\qquad\qquad\qquad\qquad\quad
\nonumber \\
\qquad\qquad\qquad\   = \beta _{0} + \beta _{1} m_{x} + \beta _{2} \left ( m^{2}_{x} + K
\sigma ^{2}_{\delta }\right ) + u =: \hat{y} + u,
\\
\label{repr_u_quad}
u = e + (\beta _{1} + 2 m_{x} \beta _{2}) \gamma + \beta _{2} \left (
\gamma ^{2} - K \sigma ^{2}_{\delta }\right ).
\end{gather}
Here $\Exvtex \left ( \left . u \right | x \right ) = 0$. From~\eqref{repr_y_quad}
we get that the~best prediction is
\begin{gather}
\hat{y} = \beta _{0x} + \beta _{1x} \cdot x + \beta _{2x} \cdot x^{2},
\nonumber
\\
\label{quad_repr}
\beta _{1x} = \beta _{1} K + 2 \beta _{2} K (1 - K) \mu ,\
\beta _{2x} = \beta _{2} \cdot K^{2}.
\end{gather}
Those coefficients can be estimated using the~strongly consistent OLS estimator,
cf.~\eqref{OLS_repr_Pol},
\begin{equation*}
\begin{pmatrix}
\hat{\beta }_{1x}
\\
\hat{\beta }_{2x}
\end{pmatrix}
= S^{+}_{rr} S_{ry}, \quad r := (x, x^{2})^{T}.
\end{equation*}
The~OLS estimator $\hat{\beta }_{0x}$ satisfies
\begin{equation*}
\bar{y} = \hat{\beta }_{0x} + \hat{\beta }_{1x} \bar{x} + \hat{\beta }_{2x}
\overline{x^{2}},
\end{equation*}
and the~OLS predictor of~$y_{0}$ is equal to
%
\begin{equation}
\tilde{y}_{0} = \hat{\beta }_{0x} + \hat{\beta }_{1x} x_{0} +
\hat{\beta }_{2x} x^{2}_{0}.
\end{equation}

To~construct a~confidence interval for~$y_{0}$, we have to~bound the~conditional
variance of~$u$ given $x_{0}$. From~\eqref{repr_u_quad} we have
\begin{equation*}
\Var \left ( \left . u \right | x \right ) = \sigma _{\epsilon }^{2} +
( \beta _{1} + 2 m_{x} \beta _{2} )^{2} K \sigma ^{2}_{\delta }+ \beta ^{2}_{2}
\cdot 2 \left (K \sigma ^{2}_{\delta }\right )^{2},
\end{equation*}
where
$2 \left ( K \sigma ^{2}_{\delta }\right )^{2} = \Var ( \gamma ^{2} )$. Denote
\begin{equation*}
m_{u^{2}} = \Exvtex \left [\Var \left ( \left . u \right | x \right )
\right ].
\end{equation*}
It holds $\text{ a.s. as }n \to \infty $:
\begin{equation*}
\frac{1}{n} \sum ^{n}_{i=1} \left ( y_{i} - \beta _{0x} - \beta _{1x} x_{i}
- \beta _{2x} x^{2}_{i} \right )^{2} \to m_{u^{2}}.
\end{equation*}
Therefore, we have $\text{ a.s. as }n \to \infty $:
\begin{equation*}
\hat{m}_{u^{2}} := \frac{1}{n} \sum ^{n}_{i=1} \left ( y_{i} -
\hat{\beta }_{0x} - \hat{\beta }_{1x} x_{i} - \hat{\beta }_{2x} x^{2}_{i}
\right )^{2} \to m_{u^{2}}.
\end{equation*}
We have to~bound the~difference
%
\begin{gather}
\label{var_diff}
\Var \left ( \left . u \right | x \right ) - m_{u^{2}} = 4 K \sigma ^{2}_{\delta }\left ( \beta ^{2}_{2} K^{2} \cdot F (K, x, \mu ) + \beta _{1}
\beta _{2} K (x - \mu ) \right ),
\\
F (k, x, \mu ) := x^{2} - \mu ^{2} - \sigma ^{2}_{x}+ 2 K (1 - K)
\mu (x - \mu ).
\nonumber
\end{gather}
Here we used the relations
\begin{gather*}
m_{x} - \Exvtex m_{x} = K (x - \mu ),
\\
m^{2}_{x} - \Exvtex m^{2}_{x} = K^{2} - \mu ^{2} - \sigma ^{2}_{x}+ 2K(1-K)
\mu (x-\mu ).
\end{gather*}
Next, we express \eqref{var_diff} through $\beta _{ix}$ rather than
$\beta _{i}$. Using \eqref{quad_repr} we get:
\begin{gather*}
\sigma ^{2}_{\delta }= \sigma ^{2}_{x}(1 - K),
\\
\begin{aligned}
&\Var \left ( \left . u \right | x \right ) - m_{u^{2}} = 4(1-K)
\sigma ^{2}_{x}\cdot \frac{\beta ^{2}_{2x}}{K} \left ( F (K, x , \mu )
- \frac{2(1-K)}{K}\mu (x-\mu )\right )+
\nonumber \\
&\qquad + 4(1-K)\sigma ^{2}_{x}\beta _{1x}\beta _{2x} \cdot \frac{x-\mu }{K}
\leq 4 \left ( \frac{1}{K_{0}} - 1 \right ) \sigma ^{2}_{x}\cdot G (x,
\mu , \sigma ^{2}_{x}, \beta _{1x}, \beta _{2x}),\nonumber
\end{aligned}
\\
\begin{aligned}
&G (x, \mu , \sigma ^{2}_{x}, \beta _{1x}, \beta _{2x}) = \beta _{2x}^{2}
\biggl [ x^{2} - \mu ^{2} - \sigma ^{2}_{x}+
\nonumber \\
&\qquad\qquad\qquad + 2 \left ( \mu (x-\mu ) \right )_{-} (1 - K_{0})^{2} \left (1 +
\frac{1}{K_{0}} \right ) \biggr ] + \left (\beta _{1x} \beta _{2x} (x -
\mu ) \right )_{+}.
\end{aligned}
\end{gather*}
Here $A_{+} := \max (A, 0)$, $A_{-} := - \min (A, 0)$,
$A \in \mathbb{R}$. Finally,
%
\begin{equation}
\label{Var_quad}
\Var \left ( \left . u \right | x \right ) \leq m_{u^{2}} + 4 \left (
\frac{1}{K_{0}} - 1 \right )\sigma ^{2}_{x}G (x, \mu , \sigma ^{2}_{x},
\beta _{1x}, \beta _{2x}).
\end{equation}

We are ready to~construct a confidence interval for~$y_{0}$.

\begin{thm}
For~the~model~\xch{\eqref{model_y_quad}--\eqref{model_x_quad}}{\eqref{model_y_quad} -- \eqref{model_x_quad}},
assume conditions \emph{\ref{quad_var}} and \emph{\ref{quad_rel}}. Fix the~confidence probability $1-\alpha $. Define
\begin{align*}
I_{\alpha }&= \Biggl \{ h \in \mathbb{R}: \left | h - \tilde{y}_{0}
\right | \leq \alpha ^{-1\!/2} \times
\nonumber \\
&\qquad\qquad\qquad\quad\ {}\times \left [ \hat{m}_{u^{2}} + 4 \left ( \frac{1}{K_{0}} - 1 \right )
\hat{\sigma }^{2}_{x}G (x_{0}, \hat{\mu }, \hat{\sigma }^{2}_{x},
\hat{\beta }_{1x}, \hat{\beta }_{2x}) \right ]^{1\!/2}_{+} \Biggr \},
\end{align*}
where $\hat{m}_{u^{2}}$, $\hat{\sigma }^{2}_{x}$, $\hat{\mu }$,
$\hat{\beta }_{1x}$ and $\hat{\beta }_{2x}$ are strongly consistent estimators
of~the corresponding parameters; the estimators were presented above. Then
\begin{equation*}
\liminf _{n \to \infty } \Prob \left ( \left . y_{0} \in I_{\alpha
}\right | x_{0} \right ) \geq 1-\alpha .
\end{equation*}
\end{thm}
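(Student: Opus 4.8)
The plan is to derive the coverage bound from the conditional-variance estimate \eqref{Var_quad} via Chebyshev's inequality, and then to upgrade the resulting \emph{oracle} statement to one about the feasible, data-dependent interval $I_\alpha$ by exploiting the strong consistency of all the plug-in estimators. First I would record the oracle step. By \eqref{repr_y_quad} the centered residual at the new point is $y_0-\hat y_0=u_0$, and by \eqref{repr_u_quad} it satisfies $\Exvtex[u_0\mid x_0]=0$ while, by \eqref{Var_quad},
\begin{equation*}
\Var(u_0\mid x_0)\le V(x_0):=m_{u^2}+4\Bigl(\tfrac{1}{K_0}-1\Bigr)\sigma^2_x\,G\bigl(x_0,\mu,\sigma^2_x,\beta_{1x},\beta_{2x}\bigr).
\end{equation*}
Writing $R:=\alpha^{-1/2}V(x_0)^{1/2}$, Chebyshev's inequality then gives, conditionally on $x_0$,
\begin{equation*}
\Prob\bigl(|u_0|>R\,\big|\,x_0\bigr)\le\frac{\Var(u_0\mid x_0)}{R^{2}}\le\frac{V(x_0)}{\alpha^{-1}V(x_0)}=\alpha,
\end{equation*}
so the unfeasible interval centered at the true $\hat y_0$ with half-width $R$ covers $y_0$ with conditional probability at least $1-\alpha$.

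Next I would establish that the feasible interval converges to this oracle one. Since the new datum $(y_0,x_0)$ is an independent copy, the estimators entering $I_\alpha$ are, conditionally on $x_0$, functions of the fitting sample alone, and by the strong law of large numbers together with the OLS consistency proved exactly as in Theorem~\ref{strong_cons_Pol_thm} one has, almost surely as $n\to\infty$, $\tilde y_0\to\hat y_0$, $\hat\mu\to\mu$, $\hat\sigma^2_x\to\sigma^2_x$, $\hat\beta_{1x}\to\beta_{1x}$, $\hat\beta_{2x}\to\beta_{2x}$, and $\hat m_{u^2}\to m_{u^2}$. Because $G$ is continuous in its arguments (a polynomial in the continuous maps $A\mapsto A_\pm$), the feasible half-width $r_n:=\alpha^{-1/2}\bigl[\hat V(x_0)\bigr]^{1/2}_{+}$, where $\hat V(x_0)$ denotes the bracketed plug-in quantity in the definition of $I_\alpha$, converges a.s. to $R$; here I use that $V(x_0)\ge\Var(u_0\mid x_0)\ge 0$, so the positive-part truncation is asymptotically inactive.

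Finally I would transfer the oracle coverage to $I_\alpha$. Setting $c_n:=\tilde y_0-\hat y_0\to 0$ a.s. and using $|y_0-\tilde y_0|=|u_0-c_n|\le|u_0|+|c_n|$, I get the inclusion $\{|u_0|\le r_n-|c_n|\}\subseteq\{y_0\in I_\alpha\}$. Conditioning on the sample $\sigma$-field $\mathcal S_n$ (with respect to which $c_n,r_n$ are measurable, while $u_0$ is independent of $\mathcal S_n$ given $x_0$), for any $\varepsilon\in(0,R)$ and all large $n$ I obtain, a.s.,
\begin{equation*}
\Prob\bigl(y_0\in I_\alpha\,\big|\,x_0,\mathcal S_n\bigr)\ge\Prob\bigl(|u_0|\le R-\varepsilon\,\big|\,x_0\bigr)\ge 1-\frac{\Var(u_0\mid x_0)}{(R-\varepsilon)^{2}},
\end{equation*}
whose limit as $\varepsilon\downarrow 0$ is at least $1-\alpha$. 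Taking $\liminf_n$ and then applying the conditional Fatou lemma and the tower property to $\Prob(y_0\in I_\alpha\mid x_0)=\Exvtex[\Prob(y_0\in I_\alpha\mid x_0,\mathcal S_n)\mid x_0]$ yields $\liminf_{n\to\infty}\Prob(y_0\in I_\alpha\mid x_0)\ge 1-\alpha$. The main obstacle is precisely this last transfer: unlike in the oracle bound, $I_\alpha$ is random and mis-centered at $\tilde y_0$ rather than $\hat y_0$, so the Chebyshev estimate does not apply verbatim; it is the conditioning-on-$\mathcal S_n$ argument, powered by $c_n\to 0$ and $r_n\to R$ a.s., that legitimizes it. The only loose end is the degenerate case $V(x_0)=0$, which forces $\sigma^2_e=\beta_1=\beta_2=0$ and hence $u_0=0$ a.s.; there the argument above is vacuous and must be noted separately, the positive-part truncation keeping $I_\alpha$ well defined.
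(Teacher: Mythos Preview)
Your proof is correct and follows essentially the same approach as the paper: apply Chebyshev's inequality to $y_0-\hat y_0=u_0$ using the variance bound \eqref{Var_quad}, then invoke the strong consistency of $\tilde y_0$, $\hat m_{u^2}$, $\hat\sigma_x^2$, $\hat\mu$, $\hat\beta_{1x}$, $\hat\beta_{2x}$ to pass from the oracle interval to $I_\alpha$. The paper's proof is considerably terser---it simply states that ``the statement follows from inequality~\eqref{Var_quad} and the consistency of'' the listed estimators---whereas you spell out the transfer step via conditioning on the sample $\sigma$-field and an $\varepsilon$-argument with Fatou, and you also flag the degenerate case $V(x_0)=0$; both are details the paper leaves implicit.
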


\begin{proof}
It holds for~$t > 0$:
\begin{equation*}
\Prob \left ( \left . \left | y_{0} - \hat{y}_{0} \right | > t \right |
x_{0} \right ) \leq
\frac{\Var \left ( \left . u \right | x_{0} \right )}{t^{2}} \leq
\alpha
\end{equation*}
if
$t$ is selected such that 
$t \geq \alpha ^{-1\!/2}\left [\Var \left ( \left . u \right | x_{0}
\right ) \right ]^{1\!/2}$. Now, the~statement follows from~the~inequality~\eqref{Var_quad}
and the~consistency of~$\tilde{y}_{0}$, $\hat{m}_{u^{2}}$,
$\hat{\sigma }^{2}_{x}$, $\hat{\mu }$, $\hat{\beta }_{1x}$ and
$\hat{\beta }_{2x}$.
\end{proof}

\section{Prediction in~other EIV models}
\label{Other_EIV}

The~OLS predictor $\tilde{y}$ approximates
the~best mean squared error predictor $\hat{y}$ presented in~\eqref{bestInd}
not only in~the~plynomial EIV model.
Let us consider the~model with~exponential regression function
%
\begin{equation}
\label{model_exp}
y = \beta e^{\lambda \xi } + e,\qquad x = \xi + \delta ,
\end{equation}
where the real numbers $\beta $ and $\lambda $ are unknown regression parameters,
and assume condition~\ref{quad_var} from Section~\ref{Conf_int_section}.
Using expansion~\xch{\eqref{xi_m_x}--\eqref{m_x}}{\eqref{xi_m_x} -- \eqref{m_x}}, we get
%
\begin{gather}
y = \beta _{x} \exp \left (\lambda _{x} \cdot x \right ) + u =:
\hat{y} + u,
\\
\beta _{x} = \beta e^{\lambda (1-K) \mu } \cdot \Exvtex e^{\lambda
\gamma }, \ \lambda _{x} = K \lambda , \ \Exvtex e^{\lambda
\gamma } = \exp \left (\frac{\lambda ^{2} K \sigma ^{2}_{\delta }}{2}
\right ),
\nonumber
\\
u=\beta _{x} e^{\lambda _{x}\cdot x}(e^{\lambda \gamma }-\Exvtex e^{
\lambda \gamma }).
\end{gather}

Under~mild conditions, the~OLS predictor
$\tilde{y}_{0} := \hat{\beta }_{x} \exp \left ( \hat{\lambda }_{x}
\cdot x_{0} \right )$ is a~strong\-ly consistent estimator of~$\hat{y}_{0}$,
where $\hat{\beta }_{x}$ and $\hat{\lambda }_{x}$ are the~OLS estimators
of the~regression parameters in~the~model~\eqref{model_exp}.

Similar conclusion can be made for~the~trigonometric model
\begin{equation*}
y = a_{0} + \sum ^{m}_{k=1} \left (a_{k} \cos k \omega \xi + b_{k}
\sin k \omega \xi \right ) + e, \qquad x = \xi + \delta ,
\end{equation*}
where $a_{k}, 0 \leq k \leq m$, $b_{k}, 1\leq k \leq m$, and
$\omega > 0$ are unknown regression parameters.

Finally, we give an~example of~the~model, where the~OLS predictor
\emph{does not} approximate the~best mean squared error predictor. Let
%
\begin{equation}
\label{model_abs}
y = \beta | \xi + a | + e,\qquad x = \xi + \delta ,
\end{equation}
where the real numbers $\beta $ and $a$ are unknown regression parameters,
and assume condition~\ref{quad_var} from~Section~\ref{Conf_int_section};
suppose also that $\sigma ^{2}_{\xi }$ and $\sigma ^{2}_{\delta }$ are positive.

For~$\gamma _{0} \sim \mathcal{N} (0,1)$, evaluate
\begin{equation*}
F(a) := \Exvtex |\gamma _{0} + a| = 2 \phi (a) + a (2 \Phi (a) - 1),
\quad a \in \mathbb{R},
\end{equation*}
where $\phi $ and $\Phi $ are the~pdf and cdf of~$\gamma _{0}$. Then the~best
mean squared error predictor is as follows:
\begin{gather*}
\begin{aligned}
&\hat{y}= \Exvtex \left ( \left . y \right | x \right ) = \beta \Exvtex
\biggl [ \Bigl | a + K x + (1-K) \mu + \sigma _{\delta }\sqrt{K} \gamma _{0}
\Bigr | \biggm | x \biggr ] =
\nonumber \\
&\qquad\qquad\qquad\qquad\qquad = \beta _{x} F \left ( k_{x} \cdot x + b_{x} \right ), \quad k_{x} > 0,
\ \beta _{x} \in \mathbb{R}, \ b_{x} \in \mathbb{R},
\end{aligned}
\\
\beta _{x} = \beta \sigma _{\delta }\sqrt{K}, \ k_{x} =
\frac{\sqrt{K}}{\sigma _{\delta }}, \ b_{x} =
\frac{a + (1 - K) \mu }{\sigma _{\delta }\sqrt{K}}.
\end{gather*}
The~LS estimators $ \hat{k}_{x}$, $\hat{\beta }_{x}$ and $\hat{b}_{x}$ of~$k_{x}$,
$\beta _{x}$ and $b_{x}$ minimize the~penalty function
\begin{equation*}
Q_{LS} (k, \beta , b ) := \sum ^{n}_{i=1} \left (y_{i} - \beta F ( k x_{i}
+ b) \right )^{2}.
\end{equation*}
Under~mild additional conditions, the~LS estimators are strongly consistent,
and the LS~predictor
\begin{equation*}
\tilde{y}_{0} := \beta _{x} F \left ( \hat{b}_{x} \cdot x_{0} + \hat{b}_{x}
\right )
\end{equation*}
converges a.s. to~$\hat{y}_{0} = \Exvtex \left ( \left . y_{0} \right | x_{0} \right ) =
\beta _{x} F \left ( b_{x} \cdot x_{0} + b_{x} \right )$ as the~sample
size grows. Notice that for~this model~\eqref{model_abs}, the~OLS predictor
$\hat{\beta }\left | x_{0} + \hat{a} \right |$ needs not to converge in~probability
to~$\hat{y}_{0}$, where the~OLS estimators $\hat{\beta }$ and $\hat{a}$ minimize
the~penalty function
\begin{equation*}
Q_{OLS} (\beta , a) := \sum ^{n}_{i=1} \left ( y_{i} - \beta | x_{i} +
a | \right )^{2}.
\end{equation*}

\section{Conclusion}
\label{Conclusion}

We considered structural EIV models with~the classical measurement error. We
gave a~list of~models where the~OLS predictor of~response~$y_{0}$ converges
with~pro\-bability one to~the~best mean squared error predictor
$\hat{y}_{0} = \Exvtex \left [ \left . y_{0} \right | z_{0}, x_{0} \right ]$.
In~such models, a functional dependence
$\hat{y}_{0} = \hat{y}_{0} ( z_{0}, x_{0})$ belongs to~the~same parametric
family as the~initial regression function
$\eta _{0}(z_{0}, \xi _{0}) = \Exvtex \left [ \left . y_{0} \right | z_{0},
\xi _{0} \right ]$. Such a~situation looks exceptional for nonlinear models,
and we gave an~example of~model~\eqref{model_abs}, where the~OLS predictor
does not perform well.

We dealt with~both the mean and individual prediction. They coincide in~the~ca\-se
of~nondifferential errors, where it is known that the~errors in~response
and in~covariates are uncorrelated. Otherwise, to~construct the~mean prediction,
one has to~know the~covariance of~the~errors.

In linear models, we managed to~construct an~asymptotic confidence region
for~response around the~OLS prediction, under~totally unknown model parameters.
In the quadratic model, we did it under~the known lower bound of~the~reliability
ratio. The~procedure can be expanded to~polynomial models of~higher order.

Notice that in~linear models without~intercept and in~incomplete polynomial
models (like, e.g. $y = \beta _{0} + \beta _{2} \xi ^{2} + e$,
$x = \xi + \delta $), a prediction with $(z,x)$ naively substituted for~$(z,
\xi )$ in~the~regression of~$y$ on~$(z, x)$ can have huge prediction errors.
As stated in~\cite[Section~2.6]{CRSC06}, predicting $y$ from~$(z, x)$
is merely a~matter of~substituting known values of~$x$ and $z$ into~the~regression
model for~$y$ on~$(z, x)$. We can add that, in~nonlinear EIV models, the~corresponding
error $v = y - \Exvtex \left [ \left . y \right | z,x \right ]$ has the~variance
depending on~$x$, i.e., the~regression of~$y$ on~$(z, x)$ is heteroskedastic;
this should be taken into~account in order to~construct
 a~confidence region for~$y$
in~a~proper way.

Finally, we make a~caveat for~practitioners. \emph{Consistent} EIV regression
parameter estimators are useful especially for~prediction\index{prediction} if the~observation
errors for~the predicted subject differ from~those in~the~data used for~the~model
fitting. This is usually the~case when the~model is fitted by~some experimental
data while the~prediction\index{prediction} is made for~a~real world subject. The~idea to~use
\emph{inconsistent} OLS estimators\index{OLS ! estimator} for~prediction in~this case is not~good.



\begin{acknowledgement}[title={Acknowledgments}]
We are grateful to~Dr.~S.~Shklyar (Kyiv) for~fruitful discussions.
\end{acknowledgement}


\end{document}